\documentclass[options]{amsart}
\usepackage{latexsym,ifthen,amssymb}
\usepackage[toc,page,title,titletoc,header]{appendix}
\usepackage{enumerate}
\usepackage{graphicx}
\usepackage{bm}
\usepackage{subfigure}
\usepackage{float}
\usepackage{rotating}
\usepackage{epstopdf}
\usepackage{hyperref}
\usepackage{tikz-cd}

\usepackage[toc,page,title,titletoc,header]{appendix}
\usepackage{multirow}
 \usepackage{longtable}
 \usepackage{rotating}




\newtheorem{theorem}{Theorem}[section]
\newtheorem{lemma}[theorem]{Lemma}

\newtheorem{claim}[theorem]{Claim}

\newtheorem{proposition}[theorem]{Proposition}

\theoremstyle{definition}
\newtheorem{definition}[theorem]{Definition}

\newtheorem{example}[theorem]{Example}

\theoremstyle{remark}
\newtheorem{remark}[theorem]{Remark}
\newtheorem {question}[theorem]{Question}
\numberwithin{equation}{section}

\theoremstyle{noparens}
\newtheorem*{question*}{Question}
\newtheorem*{theorem*}{Theorem}

\newcommand{\uhr}{{\upharpoonright}}

\title{The combinatorial equivalence of a computability theoretic question}

\author{Lu Liu}
\address{Department of Mathematics and Statistics,
Central South University,
City Changsha, Hunan Province,
China. 410083}
\email{g.jiayi.liu@gmail.com}
\subjclass[2010]{Primary  03D80; Secondary 68Q30, 03D32}
\keywords{computability theory, combinatorics,   reverse mathematics,  Hales-Jewett theorem}

\begin{document}

\maketitle

\def\opp{ENSH}
\def\PP{P}
\def\v{\vec}
\def\msf{\mathsf}
\def\t{\tilde}
\def\h{\hat}
\def\nr{n_r}
\def\2r{2_{r}}

\def\search{searching}
\def\sectionhomo{sectionally-homogeneous}

\begin{abstract}
We show that
a question of Miller and Solomon------that whether
there exists a coloring $c:d^{<\omega}\rightarrow k$
that does not admit a $c$-computable variable word infinite solution,
is equivalent to a natural, nontrivial combinatorial question.
The combinatorial question asked whether there is an infinite
sequence of integers such that each of its initial segment
satisfies a Ramsian type property.
This is the first computability theoretic question known to be equivalent to
a natural, nontrivial question that does not concern complexity notions.
It turns out that the negation of the combinatorial
question  is a generalization of
Hales-Jewett theorem. We solve some special cases of the combinatorial question
and obtain a generalization of Hales-Jewett theorem on some particular parameters.
\end{abstract}

\section{Introduction}

In this paper, we show that
a question of Miller and Solomon------that whether
there exists a coloring $c:d^{<\omega}\rightarrow k$
that does not admit a $c$-computable variable word infinite solution
(see Definition \ref{combdef2}),
is equivalent to a natural, nontrivial combinatorial question
(Theorem \ref{combth0}).
The combinatorial question asked whether there is an infinite
sequence of integers such that each of its initial segment
satisfies a Ramsian type property.
We should point out that it is not
unseen that a computability theoretic
question is equivalent to a non computability theoretic question.
However, all known examples are somewhat trivial in the following ways.
\begin{enumerate}
\item Given a computability theoretic
assertion, we can translate ``computability"
into its plain definition (say, the definition using Turing machine)
and obtain a long, unnatural combinatorial assertion.

\item The non computability theoretic question is trivial.
Every true computability theoretic proposition is, of course, equivalent to
the trivial assertion 1=1. In another word,
  once the equivalence relation is established, the computability theoretic question
is solved.

\item One of the directions of the equivalence is trivial.
In many cases, there exists an  object satisfying a property $\msf{P}$ implies there exists a
computable such object.
Usually, this is done by showing that there exists a computable tree $T$ in
$\omega^{<\omega}$ such that the   infinite paths through $T$
consist of the collection of objects satisfying $\msf{P}$,
and for every $\rho\in T$, $\rho$ admits some successor
in $T$.
It is trivial that there exists a computable $X\in \msf{P}$
implies that $\msf{P}\ne\emptyset$.

\end{enumerate}
In our example, both the combinatorial question and the
computability theoretic question are natural and nontrivial
(yet unsolved). Moreover, both directions of the equivalence
relation are nontrivial.

It turns out that disproving the combinatorial property on certain
sequences is equivalent to  Hales-Jewett theorem
(see Proposition \ref{comblem0}). Therefore, the negation of the combinatorial question,
namely every infinite sequence of integers
admits an initial segment that does not satisfy the combinatorial property is a
 generalization of Hales-Jewett theorem.
Hales-Jewett theorem states that given $d,k,n\in\omega$,
there exists an $N\in\omega$ such that for every coloring
$c:d^N\rightarrow k$, there exists an
$n$-dimensional combinatorial subspace   of $d^N$ that is
monochromatic for $c$
\footnote{In many references, this is referred to as multidimensional Hales-Jewett
theorem; and Hales-Jewett theorem refers to the conclusion
that there exists a $1$-dimensional combinatorial subspace that is monochromatic.}. An equivalent statement in terms of
the combinatorial property
(Proposition \ref{comblem0}) says that the
 combinatorial subspace can be chosen to satisfy certain property
 (the set of the smallest coordinates of each dimension can be very specific).
 Hales-Jewett theorem is of fundamental importance in combinatorics.
It implies
van der Waerden theorem (which states that for every partition of integers,
every $r\in\omega$, there exists an arithmetical progression of length $r$ in
one part); actually it implies the multidimensional van der Waerden theorem,
namely Gallai’s theorem.
Density  Hales-Jewett theorem
implies density   van der Waerden
theorem, namely Szemer\'{e}di's theorem, which states that
for every set $A$ of integers of positive density
(meaning $\limsup_{n\rightarrow\infty}|A\cap n|/n>0$), every $r\in\omega$,
there exists an arithmetical progression in $A$ of length $r$
(conjectured by Erd\H{o}s and Tur\'{a}n).
See \cite{dodos2016ramsey} for more details on Hales-Jewett theorem.
We prove that for every infinite sequence of positive integers
$n_0n_1\cdots$ with $n_0=2$, the combinatorial property does not
holds on $n_0\cdots n_r$ for some $r$ (Theorem \ref{combth2}). This generalizes
Hales-Jewett theorem on parameters: $d=2$, $k=2, n=2$.

\ \\

\noindent\textbf{Connections between
computability theory and combinatorics.}
Computability theory starts from Church-Turing thesis which clarifies what is computability
(what is an algorithm or effectiveness).
Early study of computability theory includes the complexity of certain objects such as
the set of integer-coefficient equations that admit integer roots, the set of algorithms that halt etc.
The concept of computability naturally generalizes to relative computability------$X$ computes $Y$ (that is $Y$ can be computed
using the information of $X$), and therefore gives rise
to the concept of Turing degree ($X,Y$ belong to the same Turing degree if and only if
they compute each other). Computability theory characterizes the complexity of various objects in terms
of Turing degree; and studies many structures related to the Turing degree and
the relation between different complexity notions.

Computability theory is rarely connected to other branch of mathematics. For example, it is now easy
to construct a computably enumerable (henceforth c.e.) set $X$ such that $X$ has Turing degree strictly between $\mathbf{0}$
and $\mathbf{0}'$ (where $\mathbf{0},\mathbf{0}'$ are the Turing degree of computable sets and the halting problem
respectively). However, we  do not know of a naturally defined (or non logically defined) such c.e. set.
Currently, there is no evidence that the collection of computable sets can be combinatorially or algebraically defined.
This situation is changing. Computability theory is having more and more connection to combinatorics.
To name a few:
\begin{enumerate}
\item Some complexity notions can be characterized in terms of some combinatorial notions. It is well known that
an oracle $D$ is hyperarithmetic if and only if there is a function $f\in\omega^\omega$
such that for every function $g\in\omega^\omega$ with $g(n)\geq f(n)  $ for all $n$, $g$ computes $D$.
Solovay \cite{Solovay1978Hyperarithmetically} proved that an oracle $D$ is hyperarithmetic if and only if for every infinite set $X\subseteq\omega$,
there exists a $Y\subseteq X$ such that $Y$ computes $D$. Here $g\geq f$ and subset relation are combinatorial notions.
Recently,
Dorais, Dzhafarov, Hirst, Mileti, Shafer \cite{Dorais2016uniform};
Wang \cite{Wang2014Some}; and
Cholak, Patey \cite{cholak2020thin} characterize the set that is encodable 
by a Ramsey type theorem: $\msf{RT}^n_{k,l}$
(where an instance is a coloring $c:[\omega]^n\rightarrow k$ and a solution is an infinite 
set $G\subseteq \omega$ such that $|c([G]^n)|\leq l$). An oracle $D$
is $\msf{P}$-encodable if there is a $\msf{P}$-instance $X $ such that
 every solution of $X$ computes $D$. 
 They showed that 
 the $\msf{RT}^n_{<\infty,l}$-encodable sets are exactly the hyperarithmetic sets
 when $l<2^{n-1}$
 \cite{Dorais2016uniform};
the arithmetic sets when $2^{n-1}\leq l< d_n$ \cite{Dorais2016uniform}
 (where $d_0,d_1,\cdots$ is the sequence of Catalan numbers);
  the computable sets when $l\geq d_n$ \cite{Wang2014Some}.

\item Many computability theoretic researches study the complexity of certain combinatorial notions.
Jockusch \cite{Jockusch1972Ramseys} proved that  there exists a computable coloring $c:[\omega]^3\rightarrow 2$
such that for every infinite  set $G$ so that $|c([G]^3)|=1$, $G$ computes $\emptyset'$.
This  can be read as that some computable coloring $c:[\omega]^3\rightarrow 2$ encode $\emptyset'$.
Kumabee and Lewis \cite{Kumabe2009fixed} proved that there exists a $\msf{DNR}$ $Y$ that has minimal Turing degree
(a Turing degree is minimal if below which there is no nonzero Turing degree).
Recently, Miller and Khan \cite{khan2017forcing} showed that actually, for every $\msf{DNR}$-instance $X$
(which is simply an $X\in\omega^\omega$), there exists a solution $Y$ to $X$
(which is a $Y\in\omega^\omega$ such that $Y(n)\ne X(n)$ for all $n$)
such that $Y$ has minimal Turing degree. Here   ``$\msf{DNR}$-solution to
$X$" is a combinatorial notion.

\item Researches in computability theory use more and more sophisticated combinatorial or probability technique.
For example, Csima, Dzhafarov, Hirschfeldt, Jockusch, Solomon and Westrick \cite{csima2019reverse} proved that
there is a computable coloring $c:\omega\rightarrow 2$ such that for every infinite set $G$
with $G+G=\{x+y:x\ne y\in G\}$ being monochromatic for $c$, $G$ is not computable.
Their proof heavily relies on Lovasz Local Lemma.
Monin, Liu and Patey \cite{liu2019computable} computes, using a particular arithmetic oracle,
an ordered variable word solution of a given computable coloring $c:2^{<\omega}\rightarrow 2$ (which is an $\omega$-variable word $v$ such that the set $\{v(\v a):\v a\in 2^{<\omega}\}$ is monochromatic for $c$
and all occurrence of $x_m$ is before that of $x_{m+1}$, see Definition \ref{combdef2}).
Their construction uses a similar technique as Shelah's proof of Hales-Jewett theorem \cite{shelah1988primitive}.

\end{enumerate}

\ \\

\noindent\textbf{Organization.}
In section \ref{combsec0} we introduce the
computability theoretic question
(Question \ref{combques0})
and present some related results in computability theory
and reverse mathematics.
In section \ref{combsec1}, we introduce the
Ramsian type property (Definition \ref{combdef0}) and show that
  the computability theoretic question
is equivalent to a related combinatorial question
(Theorem \ref{combth0}).
In section \ref{combsec22}, we firstly explain how 
the combinatorial question is related to Hales-Jewett theorem
(Proposition \ref{comblem0} and Remark \ref{combremark0}).
Then we prove, in section \ref{combsec2}, the negation of the combinatorial question in some special case
(Theorem \ref{combth2}),
which turns out to be a generalization of Hales-Jewett theorem (on some parameters).
\ \\

\noindent\textbf{Notation.}
For a sequence $X$ (of any objects),
we write $X\uhr r$ for the sequence
$X(0)\cdots X(r-1)$;
for a set $P = \{r_0<r_1<\cdots\}\subseteq \omega$,
we write $X\uhr P$ for the
sequence $X(r_0)X(r_1)\cdots$;
we write $X\uhr_s^r$ (where $r\geq s$) for the sequence
$X(s)\cdots X(r)$.
For two sequences $\v a,\v b$, we write $\v a^\smallfrown \v b$
(or $\v a\v b$) for the concatenation of
$\v a,\v b$.
For two sets $P_0,P_1\subseteq \omega$,
we write $P_0<P_1$ if $x<y$ for all
$x\in P_0$ and all $y\in P_1$
(note that $\emptyset<P,P<\emptyset$
hold trivially).

\section{Variable word problem}

\label{combsec0}

This section introduces a question of Miller and Solomon.
The question asked that whether every computable
coloring $c:d^{<\omega}\rightarrow k$ admits
a computable variable word infinite solution
(Question \ref{combques0}).
Before giving a concrete definition, we need some notation
on variable words.

\begin{definition}[variable word]
\label{combdef1}
Given $d,k\in \omega$,
\begin{itemize}

\item A \emph{variable word over $d$}
is a sequence $v$ (finite or infinite)
of $\{0,\cdots,d-1\}\cup\{x_0, x_1,\cdots\}$
\footnote{When $d$ is clear, we omit ``over $d$".}.
We say
$x_m$ \emph{occurs} in $v$ if $v(t) = x_m$ for some $t$.

\item
We write $P_{x_m}(v)$ for the set
of coordinates of $x_m$ in $v$, namely $\{t: v(t)  = x_m\}$;
the \emph{first occurrence} of a variable $x_m$ in $v$ refers
to the integer $\min P_{x_m}(v)$.

\item
An \emph{$n$-variable word over $d$}
is a variable word over $d$
such that there are exactly $n$ many  variables
occurring in $v$;
without loss of generality we assume that
the first occurrence of $x_m$ is smaller than
that of $x_{\h m}$ for all $m<\h m$
whenever they both occur in $v$.

\item Given an $\vec{a} \in d^{\h n}$,
an $n$-variable word $v$,
suppose $x_{m_0},x_{m_1},\cdots, x_{m_{n-1}}$ occur in $v$
with $m_{\t n-1}<m_{\t n}$ for all $\t n<n$.
We write $v(\vec{a})$ for the
$\{0,\cdots,d-1\}$-string obtained by substituting
$x_{m_{\t n}}$ with $\v a(\t n)$ in $v$
for all $\t n<\min\{n,\h n\}$ and then truncating the result
just before the first occurrence of $x_{m_{\h n}}$
(when $\h n\geq n$, the first occurrence of $x_{m_{\h n}}$ is set to be $|v|$)
\footnote{We emphasise that $v(\v a)$ is defined even when $\h n> n$.}.

\end{itemize}

\end{definition}

Given an $n$-variable word $v$ over $d$ of length $N$,
the set $\{v(\v a):\v a\in d^n\}$ is called an
$n$-dimensional combinatorial subspace of
$d^N$.
The following is an example of the variable word notation.
\begin{example}
An infinite variable word   over
$\{0,1\}$:
\begin{align}\nonumber
v=&011\ \  x_0x_0 \ 011
\ \  x_1 \  x_0 x_0
\ \   x_1 x_1
\ \ 00\ \    x_2x_2 \cdots
\\ \nonumber
\text{let }\vec{a} =& 10, \text{ then }
\\ \nonumber
v(\vec{a}) =
&011\ \  11 \ \ \ \ 011
\ \ \ 0  \ \ \  11
\ \ \ \  00
\ \ \ \ \ 00;
\\ \nonumber
\PP_{x_0}(v) =& \{3,4,9,10,\cdots\};&
\\ \nonumber
\PP_{x_1}(v) = &\{8,11,12,\cdots\}.&
\end{align}

\end{example}

In \cite{Carlson1984dual}, Carlson  and Simpson introduced the Variable Word   problem, where 
they use it to prove the Dual Ramsey theorem.
Friedman and Simpson~\cite{Friedman2000Issues}, and later Montalban~\cite{Montalban2011Open}, asked about the proof strength of the Variable Word problem.
\begin{definition}[$\msf{VWI},\msf{VW},\msf{OVW}$]
\label{combdef2}
Given $k,d\in\omega$,
\begin{itemize}
\item an instance of 
Variable Word Infinite (henceforth
$\msf{VWI}(d,k)$), Variable Word (henceforth $\msf{VW}(d,k)$) or 
Ordered Variable Word (henceforth $\msf{OVW}(d,k)$)
is a coloring $c:d^{<\omega}\rightarrow k$;

\item a $\msf{VWI}(d,k)$-solution of $c$ is a $\omega$-variable word
$v$ over $d$ such that the set $\{v(\v a):\v a\in 2^{<\omega}\}$
is monochromatic for $c$;

\item moreover, $v$ is a $\msf{VW}$
($\msf{OVW}$ respectively)-solution of $c$
if, in addition: $\PP_{x_m}(v)$ is finite
($\PP_{x_m}(v)<\PP_{x_{m+1}}(v)$
respectively)
for all $m\in\omega$.
\end{itemize}
\end{definition}

We present some computability theoretic results on the
 proof strength of variable word problems.
 We assume that readers are familiar with the basics of
 reverse mathematics such as  Recursion Comprehension
 Axiom (denoted as $\msf{RCA}_0$)
 and Weak K$\ddot{o}$nig's Lemma (denoted as $\msf{WKL}$).
But not knowing these knowledge does not affect reading the main result
(Theorem \ref{combth0}, Theorem \ref{combth2}) of this paper.
 For an introduction to reverse mathematics, see \cite{Simpson2009Subsystems};
 or see \cite{hirschfeldt2015slicing} for a recent development in this area.
Clearly $\msf{OVW}$ has the strongest requirement on its solution
while $\msf{VWI} $ has the weakest requirement on its solution.
Therefore, over $\msf{RCA}_0$,
$\msf{OVW}(d,k)\rightarrow \msf{VW}(d,k)\rightarrow \msf{VWI}(d,k)$.
However, their relation is unknown when the dimension index $d$ are different.
On the other hand, the variable word problems admit,
as many other Ramsian type problems,
an iterative property, i.e.,
a $k$-coloring problem can be solved by repeatedly invoking
a $2$-coloring problem explained as following.
Given a $k$-coloring $c:d^{<\omega}\rightarrow k$,
merge $k-1$ many colors into color $0$ and get a $2$-coloring
$\h c:d^{<\omega}\rightarrow 2$.
Let $v$ be a solution to $\h c$. Suppose
$c(\{v(\v a):\v a\in d^{<\omega}\})\subseteq \{0,\cdots,k-1\}$
(otherwise we are done).
Now $v$ together with $c$ give rise to a $(k-1)$-coloring $\t c$ on
$d^{<\omega}$ defined as following:
$$\t c(\v a) = c(v(\v a))\text{ for all }\v a\in d^{<\omega}.$$
Clearly, given a solution $\t v$ to $\t c$, we can compute
a solution to $c$ using $\t v$ and $v$.
Thus, we have, over $\msf{RCA}_0$, $\msf{V}(d,k)\leftrightarrow \msf{V}(d,k+1)$
for all $k\geq 2$ where
$\msf{V}$ is $\msf{OVW},\msf{VW}$ or $\msf{VWI}$.

In \cite{Miller2004Effectiveness}, Miller and Solomon proved that there is a
computable $\msf{OVW}(2,2)$-instance that does not admit a
$\Delta_2^0$ solution.
This implies that over $\msf{RCA}_0$,
 $\msf{WKL}$ does not prove $\msf{VW}(2,2)$.
 To see this, note that for every $\msf{VW}(2,2)$-solution $v$ of $c$,
 $v'$ computes a $\msf{OVW}(2,2)$-solution of $c$ in the way that
 $v'$ could compute a sequence $m_0<m_1<\cdots$
 of integers such that $P_{x_{m_r}}(v)<P_{x_{m_{r+1}}}(v)$ for all $r\in\omega$.
 On the other hand, if $\msf{WKL}$ proves $\msf{VW}(2,2)$, then every
 computable $\msf{VW}(2,2)$-instance admits a low solution, thus every
 computable $\msf{OVW}(2,2)$-instance admits a $\Delta_2^0$ solution, a contradiction.

In \cite{liu2019computable}, Liu, Monin and Patey showed that $\msf{ACA}$
proves $\msf{OVW}(2,2)$. Actually, they showed that
for every computable $\msf{OVW}(2,2)$-instance $c$,
 every $\emptyset'$-PA degree computes a solution of $c$.
 On the other hand, they also construct a computable $\msf{OVW}(2,2)$-instance
 such that every solution is of $\emptyset'$-DNC degree.
The variable word problems are closely related to Hindman's theorem and the Dual Ramsey theorem
(see e.g. \cite{dzhafarov2017effectiveness}).

A lot of questions on variable word problems are unknown,
say the proof strength lower bound, the relations between the three
versions of the variable word problem, the proof strength upper bound of
$\msf{VWI}$ etc.
Miller and Solomon asked the following questions
\begin{question}[Miller and Solomon]\label{combques0}
Does $\msf{RCA}_0$ prove $\msf{VWI}(2,2)$?
Or, in terms of computability theoretic language,
does every   $\msf{VWI}(2,2)$-instance $c$ admit a $c$-computable solution?

\end{question}
We will show that   Question \ref{combques0}
is equivalent to a natural combinatorial question.
Concerning the lower bound of these problems,
\begin{question}[Miller and Solomon]
\label{combques1}
Is there a $d\in\omega$
such that $\msf{OVW}(d,2) $ proves
$\msf{ACA}$?
On the other hand, it is not even known that whether
$\msf{VWI}(2,2)$ proves $\msf{ACA}$.

\end{question}
Usually, Question \ref{combques1} is closely related to the question
whether a computable $\msf{OVW}(d,2)$-instance could encode an incomputable oracle,
i.e., whether there is an incomputable oracle $D$ and a computable $\msf{OVW}(d,2)$-instance
such that every solution computes $D$.
However, we don't even know if an arbitrary instance (not necessarily computable)
of $\msf{OVW}(d,2)$ could encode an incomputable oracle.
\begin{question}
Is there an incomputable oracle $D$, a $d\in\omega$ and
a $\msf{OVW}(d,2)$-instance $c$ (not necessarily computable)
such that every solution of $c$ computes $D$.
\end{question}
We also wonder the relation between
the three versions of the variable word problem.
\begin{question}
Given a $d\in\omega$,
 over $\msf{RCA}_0$,
does $\msf{VW}(d,2)$ prove $\msf{OVW}(2,2)$;
does
 $\msf{VWI}(d,2)$ prove $\msf{VW}(2,2)$?
\end{question}
We also wonder whether the dimension affects the proof strength of variable word problem.
\begin{question}
Given a $d\in\omega$, over $\msf{RCA}_0$,
does $\msf{OVW}(d,2)$
($\msf{VW}(d,2), \msf{VWI}(d,2)$ respectively) prove
$\msf{OVW}(d+1,2)$
($\msf{VW}(d+1,2), \msf{VWI}(d+1,2)$ respectively)?

\end{question}
\section{The combinatorial equivalence}
\label{combsec1}

In this section, we show that
 Question \ref{combques0} is equivalent to
a combinatorial question (see Theorem \ref{combth0}). The combinatorial
question
asked whether there is an infinite sequence of integers
such that every initial segment of the sequence satisfies
a Ramsian type combinatorial property defined in Definition \ref{combdef0}.

\begin{definition}[$\opp_k^d$]\label{combdef0}
Let $n_0,n_1,\cdots,n_{r-1}$ be a sequence
of positive integers, let $N_0 = \{0,\cdots, n_0-1\}$,
$N_1 = \{n_0,\cdots, n_0+n_1-1\}$,
$\cdots$,
$N_{r-1}=\{n_0+\cdots+n_{r-2},\cdots, n_0+\cdots+n_{r-1}-1\}$,
and $N  = \cup_{s\leq r-1}N_s$;
let $f:d^N\rightarrow k$.
We say $n_0\cdots n_r$ is \emph{\sectionhomo} for $f$
if there exists an $s\leq r-1$,
an $n_s$-variable word $v$ over $d$ of length $N$
such that 
the first occurrence of variables in $v$ consist
of $N_s$, i.e.,
$$\{\min \PP_{x_m}(v):m\in\omega\}
=N_s,$$
and $v$ is monochromatic for $f$.

We write $\opp^d_k(n_0\cdots n_{r-1})$
iff there exists a coloring $f:d^N\rightarrow k$
such that 
$n_0\cdots n_{r-1}$ is \emph{not} \sectionhomo\ for $f$
\footnote{\opp\ is short for: existence of a coloring $f$ such that the sequence is non-\sectionhomo\ for $f$.}.
In that case we say $f$ witnesses
$\opp^d_k(n_0\cdots n_{r-1})$.

\end{definition}

\def\mb{\mathbf}
We give some intuition of $\opp_k^d$
by giving some simple observations and examples.
For $\v{n},\v{\h n}\in\omega^{<\omega}$
we write $\v{n}\leq \v{\h n}$ if $|\v{n}| = |\v{\h n}|$
and
$\v{n}(s)\leq \v{\h n}(s)$ for all $s<|\v{n}|$.
We say $\v{n}$ is a subsequence of $\v{\h n}$
if there are integers $s_0<s_1<\cdots<s_{m-1}<|\v{\h n}|$
such that $\v{n} = \v{\h n}(s_0)\cdots\v{\h n}(s_{m-1})$.
It's obvious that:
\begin{proposition}\label{combprop0}
If $\v{n}$ is a subsequence of $\v{\h n}$
or $\v{n}\geq \v{\h n}$,
then
$\opp_k^d(\v{\h n})$ implies
$\opp_k^d(\v{n})$.

\end{proposition}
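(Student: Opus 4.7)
The plan is to prove $\opp_k^d(\v{\h n})\Rightarrow\opp_k^d(\v n)$ by transporting a witness. Fix a witness $\h f:d^{\h N}\to k$ of $\opp_k^d(\v{\h n})$. In both cases I will construct a block-preserving map $\psi:d^N\to d^{\h N}$ and set $f=\h f\circ\psi$. The verification that $f$ witnesses $\opp_k^d(\v n)$ then reduces to showing that any hypothetical \sectionhomo\ variable word $v$ for $f$ lifts through $\psi$ to a \sectionhomo\ variable word $\h v$ for $\h f$, contradicting the choice of $\h f$.

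Subsequence case. Write $\v n=\v{\h n}(s_0)\cdots\v{\h n}(s_{m-1})$ and define $\psi$ to place the $j$-th $n_j$-block of its input into the slot $\h N_{s_j}$, filling the remaining blocks with zeros. Given a \sectionhomo\ $v$ for $f$ whose first occurrences form $N_i$ for some $i<m$, form $\h v$ by copying the $j$-th block of $v$ into $\h N_{s_j}$ for each $j<m$ and placing zeros in the remaining blocks. Then $\h v$ is an $n_i$-variable word whose set of first occurrences is exactly $\h N_{s_i}$, and the identity $\h v(\v a)=\psi(v(\v a))$ is immediate, so monochromaticity of $v$ for $f$ passes to $\h v$ for $\h f$.

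Pointwise domination case. With $|\v n|=|\v{\h n}|$ and $n_t\geq\h n_t$ for every $t$, define $\psi$ block-wise by truncating the $t$-th $n_t$-block of its input to its first $\h n_t$ characters. Given a \sectionhomo\ $v$ for $f$ whose first occurrences form $N_i$, construct $\h v$ by applying the same block-wise truncation to $v$ and replacing any residual variable $x_m$ with $m\geq\h n_i$ by the constant $0$. Because the definition orders the first occurrences of $x_0,x_1,\ldots$, exactly the variables $x_0,\ldots,x_{\h n_i-1}$ have their first occurrences in the surviving prefix of block $i$, and these positions fill $\h N_i$. Monochromaticity then transfers via $\h v(\v{\h a})=\psi(v(\v a))$, where $\v a\in d^{n_i}$ denotes $\v{\h a}$ padded by zeros in coordinates $\h n_i,\ldots,n_i-1$.

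The main subtlety lies in the domination case: the discarded variables $x_{\h n_i},\ldots,x_{n_i-1}$ may recur in blocks $t>i$ of $v$, which would otherwise leave $\h v$ carrying variables whose first occurrences lie outside $\h N_i$. Replacing them by $0$ in $\h v$ and simultaneously padding $\v{\h a}$ by zeros to form $\v a$ is the bookkeeping that preserves the identity $\h v(\v{\h a})=\psi(v(\v a))$. Everything else is routine checking that block sizes, first-occurrence positions, and constant symbols are respected by the two constructions.
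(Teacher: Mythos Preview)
Your proof is correct and follows essentially the same approach as the paper: in both cases you build the same block-preserving map $\psi$ (zero-padding for the subsequence case, block-wise truncation for the domination case), set $f=\h f\circ\psi$, and verify by lifting a candidate variable word through $\psi$. The paper phrases the verification directly (find $\v b_0,\v b_1$ with distinct colors) while you phrase it contrapositively (a monochromatic $v$ for $f$ yields a monochromatic $\h v$ for $\h f$), but this is the same argument. In the domination case the paper simply writes ``it's easy to verify'' and omits the details; your discussion of zeroing out the discarded variables $x_{\h n_i},\ldots,x_{n_i-1}$ and padding $\v{\h a}$ accordingly is exactly the bookkeeping needed to make that verification go through, and it is done correctly.
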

\begin{proof}
For $\v{n}$ being a subsequence of $\v{\h n}$,
suppose $\v{n} = \v{\h n}(s_0)\cdots\v{\h n}(s_{r-1})$
and $\h f$ witnesses $\opp_k^d(\v{\h n})$.
We will construct a witness for $\opp_k^d(\v{n})$ according to $f$.
Let $N = \v{n}(0)+\cdots + \v{n}(r-1)$,
$\h N = \v{\h n}(0)+\cdots+\v{\h n}(|\v{\h n}|-1)$.
The function $f$ is constructed by embedding $d^N$ in to
$d^{\h N}$ (see (\ref{combdef0})).
For every $\v a \in d^N$, suppose $\v a = \v a_0\v a_1\cdots\v a_{r-1}$
where $|\v a_s| = \v{n}(s)$ for all $s<r$,
let
\begin{align}\label{combeq0}
\v{\h a} = \underbrace{0\cdots 0}
\limits_{\v{\h n}(0)+\cdots +\v{\h n}(s_0-1)} \v a_0 \underbrace{0\cdots 0}
\limits_{\v{\h n}(s_0+1)+\cdots+ \v{\h n}(s_1-1)} \v a_1
\underbrace{0\cdots0}
\limits_{\v{\h n}(s_1+1)+\cdots +\v{\h n}(s_2-1)} \v a_2\cdots
\end{align}
and set
$$
f(\v a) = \h f(\v{\h a}).
$$

To show that $f $ witnesses
$\opp_k^d(\v{n})$, let $s<r$ and
let $v$ be an $\v{n}(s)$-variable word
over $d$
of length $N$
with $\{\min\PP_{x_{m}}(v):m\in\omega\}=N_s$.
Let $\h v$ be the $\v{n}(s)$-variable
word over $d$ of length $\h N = \v{\h n}(0)+\cdots+\v{\h n}(|\v{\h n}|-1)$
defined in the same fashion as (\ref{combeq0}).
Since $\h f$ witnesses $\opp_k^d(\v{\h n})$,
there exists $\v b_0,\v b_1\in d^{\v{n}(s)}$
such that $\h f(\h v(\v b_0))\ne \h f(\h v(\v b_1))$.
Clearly, we have $f(v(\v b_0))\ne f(v(\v b_1))$,
thus we are done.

For $\v{n}=n_0\cdots n_{r-1}\geq \v{\h n} = \h n_0\cdots \h n_{r-1}$,
suppose $\h f$ witness $\opp_k^d(\v{\h n})$.
Note that we can project $d^{N}$ into $d^{\h N}$ as following.
For every $\v a = \v a_0\cdots \v a_{r-1}\in d^N$ where $|\v a_s | = \v{n}(s)$ for all $s<r$, let
$\v{\h a}_s = \v a_s\uhr \h n_s $,
and let $\v{\h a} = \v{\h a}_0\cdots \v{\h a}_{r-1}$.
Set $f(\v a) = \h f(\v{\h a})$ for all $\v a\in d^N$.
It's easy to verify that $f$ witnesses $\opp_k^d(\v{n})$.

\end{proof}

\begin{proposition}
If $\h d\geq d,\h k\geq k$, then
$\opp_k^d(\v{ n})$ implies
$\opp_{\h k}^{\h d}(\v{n})$.

\end{proposition}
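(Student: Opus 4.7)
The plan is to produce a witness $\h f:\h d^N\rightarrow \h k$ for $\opp_{\h k}^{\h d}(\v n)$ by composing a given witness $f:d^N\rightarrow k$ for $\opp_k^d(\v n)$ with a coordinatewise retraction from $\h d$ onto $d$. Concretely, let $\pi:\h d\rightarrow d$ be the map that is the identity on $\{0,\cdots,d-1\}$ and sends $\{d,\cdots,\h d-1\}$ to $0$; extend $\pi$ coordinatewise to a map $\h d^N\rightarrow d^N$, also denoted $\pi$. Using $k\subseteq \h k$, I would set $\h f(\v{\h a}) = f(\pi(\v{\h a}))$ and regard $\h f$ as a $\h k$-coloring of $\h d^N$.

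To verify $\h f$ witnesses non-\sectionhomo-ness of $\v n$, I would take any $s\leq r-1$ and any $n_s$-variable word $\h v$ over $\h d$ of length $N$ with $\{\min \PP_{x_m}(\h v):m\in\omega\}=N_s$, and form a ``projected'' $n_s$-variable word $v$ over $d$ of the same length by replacing each constant coordinate value $c\in \h d$ of $\h v$ by $\pi(c)\in d$ while leaving the variable symbols unchanged; this $v$ has the same first-occurrence set $N_s$ and hence is a valid test word for $f$. The key identity is that, for $\v b\in d^{n_s}\subseteq \h d^{n_s}$, one has $\pi(\h v(\v b)) = v(\v b)$, because $\pi$ fixes each entry of $\v b$ pointwise and, by construction, realizes the projection on the constant entries. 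Consequently $\h f(\h v(\v b)) = f(v(\v b))$.

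Since $f$ witnesses $\opp_k^d(\v n)$, the word $v$ is not monochromatic for $f$, so some $\v b_0,\v b_1\in d^{n_s}$ satisfy $f(v(\v b_0))\ne f(v(\v b_1))$; pulling back through the identity above yields $\h f(\h v(\v b_0))\ne \h f(\h v(\v b_1))$, so $\h v$ is not monochromatic for $\h f$ either. As $s$ and $\h v$ were arbitrary, this gives $\opp_{\h k}^{\h d}(\v n)$. There is essentially no obstacle here: the argument is a routine monotonicity check very much in the spirit of the second half of the proof of Proposition \ref{combprop0}, and the only care needed is to arrange that $\pi$ restricts to the identity on $\{0,\cdots,d-1\}$ so that substituting $\v b$'s values survives the projection unchanged.
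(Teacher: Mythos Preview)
Your proof is correct and matches the paper's approach: the paper's one-line argument (``embed $d^N$ into $\h d^N$\ldots in a similar fashion as Proposition~\ref{combprop0}'') is precisely your construction of $\h f = f\circ\pi$ via the coordinatewise retraction, followed by projecting the test word $\h v$ down to $v$ and comparing on substitutions $\v b\in d^{n_s}\subseteq \h d^{n_s}$. Your write-up is more explicit than the paper's, and your observation that $\pi$ must restrict to the identity on $\{0,\dots,d-1\}$ so that $\pi(\h v(\v b))=v(\v b)$ is exactly the point that makes the verification go through.
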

\begin{proof}
Obviously, we can embed $d^N$ into $ \h d^{ N}$.
Thus the proof follows in a similar fashion as Proposition \ref{combprop0}.
\end{proof}

\begin{proposition}\label{combprop1}
We have $\opp_2^2(22),
\opp_2^2(222)$
and
$\opp_2^2(n)$
for all $n>0$.

\end{proposition}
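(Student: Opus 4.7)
The plan is to verify each of the three statements by exhibiting an explicit coloring and analyzing the valid sectionally-homogeneous variable words case by case.

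For $\opp_2^2(n)$ with $n>0$ (the single-block case), observe that the constraint $\{\min\PP_{x_m}(v):m\in\omega\}=\{0,\ldots,n-1\}$, combined with the convention that variables are indexed by first occurrence, forces $v=x_0x_1\cdots x_{n-1}$. Hence $\{v(\v a):\v a\in 2^n\}=2^n$, so any non-constant $f:2^n\to 2$ witnesses $\opp_2^2(n)$.

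For $\opp_2^2(22)$ I would propose $f(x_0x_1x_2x_3)=x_0\oplus x_1\oplus x_2 x_3$. For $s=1$ every valid $v$ has the form $c_0c_1x_0x_1$, and $f(v(\v a))=c_0\oplus c_1\oplus a_0a_1$ takes both values on $\v a\in 2^2$. For $s=0$ we have $v=x_0x_1w_0w_1$ with $w_i\in\{0,1,x_0,x_1\}$, so that $f(v(\v a))=a_0\oplus a_1\oplus [w_0][w_1]$, where $[w_0][w_1]\in\{0,1,a_0,a_1,a_0a_1\}$ across the $16$ choices of $(w_0,w_1)$; direct enumeration shows $f(v(\v a))$ depends on $\v a$ in each case.

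For $\opp_2^2(222)$ I would analogously exhibit a specific $f:2^6\to 2$ and verify non-constancy on each of the $16+64+256=336$ sectionally-homogeneous 2-dimensional affine subspaces of $\mathbb{F}_2^6$, indexed by $s\in\{0,1,2\}$ together with the choice of constants and variable placements outside $N_s$. The main obstacle is locating such $f$: the natural quadratic extension $x_0\oplus x_1\oplus x_2x_3\oplus x_4x_5$ of the $\opp_2^2(22)$ witness collapses to a constant on $v=x_0x_1x_0x_1 00$, where the repeated $x_0x_1$ block forces the quadratic term to cancel, and parity-like modifications fail similarly on $v=c_0c_1x_0x_1x_0x_1$. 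Hence the witness $f$ must contain higher-degree or asymmetrically placed terms that break this ``repeat-the-block'' symmetry---for instance a polynomial in $\mathbb{F}_2[x_0,\ldots,x_5]$ of degree $\geq 3$ with carefully chosen supports---and identifying one reduces to a finite search over a bounded family of candidates. Once $f$ is in hand, verification is a direct, if lengthy, case enumeration over the $4^4$ possibilities for $w$ in the $s=0$ analysis, the $2^2\cdot 4^2=64$ possibilities in the $s=1$ analysis, and the $16$ prefixes in the trivial $s=2$ analysis, mirroring the $\opp_2^2(22)$ argument above.
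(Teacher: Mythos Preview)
Your arguments for $\opp_2^2(n)$ and $\opp_2^2(22)$ are correct, though your witnesses differ from the paper's. The paper uses $f(\vec a)=\vec a(0)\bmod 2$ for the single block and $f(\vec a)=\vec a(0)+\vec a(1)+\vec a(2)\bmod 2$ for the length-two case; your quadratic $x_0\oplus x_1\oplus x_2x_3$ also works, just with more cases to check.

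For $\opp_2^2(222)$, however, there is a genuine gap: you have not exhibited a witness. Saying that ``identifying one reduces to a finite search'' is not a proof of existence---the proposition asserts that such an $f$ exists, and the content of the argument is to produce one. (Incidentally, your diagnosis of why the natural quadratic fails is slightly off: on $v=x_0x_1x_0x_1 00$ the function $x_0\oplus x_1\oplus x_2x_3\oplus x_4x_5$ restricts to $a_0\oplus a_1\oplus a_0a_1$, which is \emph{not} constant; the actual failure is at $s=1$ with $v=c_0c_1x_0x_1x_0x_1$, where the two $a_0a_1$ terms cancel.)

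The paper's witness is short and avoids any search:
\[
f(\vec a)=I(\vec a(0)+\vec a(1)>0)+\vec a(2)+\vec a(3)+\vec a(4)\ \bmod\ 2,
\]
where $I$ is the indicator. The key idea---which your discussion of ``higher-degree or asymmetrically placed terms'' circles around but does not land on---is to put the nonlinearity in block $N_0$ rather than in the later blocks. Over $\mathbb F_2$ one has $I(\vec a(0)+\vec a(1)>0)=\vec a(0)\oplus\vec a(1)\oplus\vec a(0)\vec a(1)$, which is nonlinear in $(a_0,a_1)$; meanwhile any later coordinate $w_i\in\{0,1,x_0,x_1\}$ evaluates to a \emph{linear} function of $(a_0,a_1)$, so for $s=0$ the sum $[w_2]\oplus[w_3]\oplus[w_4]$ is linear and can never cancel the quadratic part. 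The $s=1$ and $s=2$ cases are immediate because $a_0$ (respectively $a_0\oplus a_1$) appears linearly and cannot be cancelled by the remaining free positions. This is a degree-$2$ polynomial, so your expectation that degree $\geq 3$ is needed is not borne out.
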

\begin{proof}
To see $\opp_2^2(22)$,
consider $$f(\v a) =
\v a(0)+\v a(1)+\v a(2)
\ mod\ 2.$$
To see $\opp_2^2(222)$,
consider
$$f(\v a) =
I(\v a(0)+\v a(1)>0)+
\v a(2)+\v a(3)
+\v a(4)\ mod\ 2.$$
Where $I$
 is the indication function.
 To see $\opp_2^2(n)$,
simply consider
$f(\v a) =
\v a(0)\ mod\ 2.$
\end{proof}

Now comes our main result.
Let $\opp_k^d$ denote the set of
$\omega$-sequences $X$ of positive integers
such that for every $r>0$,
$\opp_k^d(X\uhr r)$.
\begin{theorem}\label{combth0}
The followings are equivalent:
\begin{enumerate}
\item There exists an $X\in\opp_k^d$.

\item There exists a $\msf{VWI}(d,k)$-instance
$c$ that does not admit a $c$-computable solution.
\end{enumerate}

\end{theorem}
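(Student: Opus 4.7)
The plan is to prove the two directions separately: $(2)\Rightarrow(1)$ by contraposition, building a $c$-computable solution from the assumption that no infinite $X\in\opp_k^d$ exists, and $(1)\Rightarrow(2)$ by constructing a witnessing instance $c$ directly from the data $(X,(f_r))$ via a stagewise diagonalization.

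For $(2)\Rightarrow(1)$ I assume every infinite sequence of positive integers admits some initial segment that is sectionally-homogeneous for every coloring, and for a given $\msf{VWI}(d,k)$-instance $c$ I build a $c$-computable solution iteratively. I maintain a partial variable word $v_t$ of finite length with $m_t$ distinct variables that is $c$-monochromatic of some fixed color $\kappa$. To extend, I form the derived coloring $\h c_{v_t}(\sigma)(\v a):=c(v_t(\v a)^\smallfrown\sigma)$ on $d^{<\omega}$, which a priori takes $k^{d^{m_t}}$ values but can be collapsed to a $k$-coloring via the same iteration used in the excerpt to show $\msf V(d,k)\leftrightarrow\msf V(d,k+1)$. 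For a computably chosen sequence $X^t$, the hypothesis yields some $r$ with $X^t\uhr r$ sectionally-homogeneous for the collapsed coloring on $d^{N_r}$; this produces an $n_s$-variable word $w$ of length $N_r$ with constant prefix of length $N_{<s}$, the $n_s$ new first occurrences in the block $N_s$, and subsequent filler, all $c$-monochromatic of color $\kappa$. Appending $w$ to $v_t$ yields $v_{t+1}$ with $n_s\geq 1$ additional variables, and the limit $v=\lim_t v_t$ is the desired $c$-computable VWI solution.

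For $(1)\Rightarrow(2)$, given $X=n_0n_1\cdots\in\opp_k^d$ with witnesses $f_r:d^{N_r}\to k$, I construct $c$ so that every $c$-computable variable word fails monochromaticity. I commit to $c\uhr d^{N_r}=f_r$ on each prescribed level and fill in $c$ at intermediate lengths by a stagewise diagonalization defeating every $\Phi_e^c$: at stage $e$ I locate a fresh $r$ at which the current finite condition has not yet committed, inspect the purported variable word $\Phi_e^c$ up to length $N_r$, and exploit the non-sectional-homogeneity of $f_r$ to produce two substitutions $\v a_0,\v a_1$ whose images $\Phi_e^c(\v a_0),\Phi_e^c(\v a_1)$ are forced by $f_r$ to carry distinct colors, breaking monochromaticity. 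The freedom in the intermediate lengths is used to enforce block-alignment properties that make the witness $f_r$ directly applicable at each stage.

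The main obstacle I anticipate lies in $(1)\Rightarrow(2)$: the sectional-homogeneity property demands first-occurrence positions to form a contiguous block $N_s$, whereas a general VWI solution has no such restriction. To invoke the witness $f_r$ against $\Phi_e^c$ truncated at length $N_r$, I must either ensure that this truncation exhibits block structure, or shape $c$ at the intermediate lengths so as to force such alignment on any potential solution. Arranging the diagonalization to handle this mismatch robustly, probably by techniques analogous to those in~\cite{liu2019computable}, is the crux of the argument.
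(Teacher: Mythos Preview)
Your $(1)\Rightarrow(2)$ plan---committing $c\uhr d^{N_r}=f_r$ in advance and hoping to ``enforce block-alignment'' at intermediate lengths---does not work, and the obstacle you flag is exactly where the missing idea lives. A $c$-computable variable word need not have its first-occurrence positions anywhere near the blocks $N_s$, and no padding at intermediate levels forces them there. The paper's fix reverses the dependency: rather than fixing the block structure in advance and hoping the Turing functionals comply, it \emph{reads off} first-occurrence sets $P_r$ from whatever the functionals $\Psi^X_e$ actually output (arranging via a finite-injury priority argument that $|P_r|=X(r)$ and $P_r<P_{r+1}$), and then defines $c$ on $d^n$ by $c(\v a)=f_{r(n)+1}(\v a\uhr\bigcup_{s\le r(n)}P_s)$. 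The projection onto the $P_r$'s makes any would-be solution $v\succeq v_r$ look, after restriction, like a variable word whose first occurrences are exactly $N_r$, so the witness $f_{r(n)+1}$ defeats it directly. Nothing is demanded of $c$ at the fixed levels $N_r$.

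Your $(2)\Rightarrow(1)$ direction also has a gap: the derived coloring $\h c_{v_t}$ takes $k^{d^{m_t}}$ values, and the $\msf V(d,k)\leftrightarrow\msf V(d,k+1)$ iteration does not collapse this to a use of $\opp_k^d=\emptyset$. That iteration works by composing solutions, but after one round of sectional homogeneity the resulting variable word $v$ has all its first occurrences in a single block $N_s$, and iterating inside $\{v(\v a):\v a\in d^{n_s}\}$ destroys the sectional structure needed to invoke the hypothesis again. The paper sidesteps this by \emph{not} trying to preserve the variable structure of a single growing word: it runs a tower of greedy procedures $\Phi_0^c,\Phi_1^c,\ldots$, where $\Phi_{r}^c$ places all its variables after $|v_{r-1}|$ and extends whenever \emph{some} fixed prefix $\v a$ of the appropriate length makes the extension monochromatic---a genuine $k$-coloring condition, not a $k^{d^{m_t}}$-coloring one. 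If every $\Phi_r^c$ eventually stalls at some $v_r$ with $n_r-1$ variables, the stall condition (no such $\v a$ exists for any longer $\h v$) is exactly what lets $c$ furnish a witness $f$ for $\opp_k^d(n_0\cdots n_r)$.
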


We emphasis that in item (1) of Theorem \ref{combth0},
there is no complexity restriction on $X$.
Therefore item (1) is a natural combinatorial assertion.
It's also clear that item (2) of Theorem \ref{combth0}
is a negation of   Question \ref{combques0}.
We say a Turing functional on an oracle $X$, namely $\Psi^X$, computes a  variable word
if $\Psi^X$ is an $X$-c.e. set
(possibly finite)
$\{v_0,v_1,\cdots\}$ of finitely long variable words
such that $v_0\preceq v_1\preceq\cdots$;
we write $\Psi^X[t]$ to denote this enumerable set computed by time $t$;
for convenience, we assume that $x_{m+1}$ occurs in $v_n$ implies $x_m$ occurs in $v_n$
for all $m,n\in\omega$.
We say $\Psi^X$ is total if $\cup_{v\in\Psi^X}v$ is an $\omega$-variable word;
$\Psi^X$ is a $\msf{VWI}(d,k)$-solution to $c:d^{<\omega}\rightarrow k$ if it is total and
for every $v\in \Psi^X$, $|c(\{v(\v a):\v a\in d^{<\omega}\})|=1$.

\begin{proof}

(1)$\Rightarrow$(2):
We will use $X$ to compute a coloring $c:d^{<\omega}\rightarrow k$
such that $c$ does not admit $X$-computable $\msf{VWI}(d,k)$-solution.
We need $c$ to diagonal against $\Psi_0^X,\Psi_1^X,\cdots$
where each Turing functional is computing a variable word.

To illustrate the combinatorial idea of the proof
(and put priority argument aside), we temporarily assume that
each Turing functional is total.
Therefore,
\begin{align}\label{combeq3}
&\text{for each $r\in\omega$,
let $v_r\in\Psi_r^X$ be such that
$v_r$ contains $X(r)$ many}\\ \nonumber
&\text{variables whose first occurrence is after $|v_{r-1}|$
.}
\end{align}
\footnote{We set $|v_{-1}|=0$.}
Without loss of generality, assume that
all variables in $v_r$ occur after $|v_{r-1}|$
(otherwise substitute
the exceptional  variables by $0$); moreover, suppose
  the variables occurring in $v_r$
are $\{x_m: m<X(r)\}$.
Suppose $f_r$ witnesses $\opp_k^d(X\uhr r)$,
we will transform these $(f_r: r>0)$ into a coloring $c$
such that none of these $v_r$ can be extended to a solution in the sense
that
\begin{align}\label{combeq1}
&\text{there is no   $X(r)$-variable word $v\succeq v_r$
}\text{such that $ v$ is monochromatic for
$c$,}\\ \nonumber
&\text{ namely $|c(\{v(\v a): \v a\in d^{X(r)}\})|=1$.
 }
 \end{align}

 For each $r\in\omega$, let $P_r$
be the set $\{\min P_{x_m}(v_r): m<X(r) \}$
of the first occurrence of the variables in $v_r$.
Clearly $|P_r| = X(r)$ and $P_r<P_{r+1}$ for all $r\in\omega$.
For every $n\in\omega$,
let $r(n)$ be the maximal integer such that
$|v_{r(n)}|\leq n$, we define
$c$ on $d^n$ so that no $v_r$ with $r\leq r(n)$ can be extended
to a $ v$ with $|v|=n$ that is monochromatic for $c$.
If $r(n)$ is undefined, then define $c$ on $d^n$ arbitrarily.
 Otherwise, we define $c$ on $d^n$ so that
$c(\v a)$ depends on $\v a\uhr \cup_{r\leq r(n)}P_r$.
More specifically, for $\v a\in d^n$, let
\begin{align}\label{combeq2}
c(\v a) = f_{r(n)+1}(\v a\uhr \cup_{r\leq r(n)}P_r).
\end{align}

 We now verify (\ref{combeq1}).
 Fix an $r\in\omega$,
 an $X(r)$-variable word $v\succeq v_r$.
 Suppose $|v| = n$.
 It suffices to show that there are $\v b_0,\v b_1\in d^{X(r)}$
 such that $c(v(\v b_0))\ne c( v(\v b_1))$.

Let $j$ be the 1-1 order preserving map
from $P=\cup_{r\in\omega} P_r$ to $\omega$.
Let $$\h v =  v\uhr \cup_{r\leq r(n)}P_r$$
and note that $\h v$
 is an $X(r)$-variable word of length
 $X(0)+\cdots+ X(r(n))$
 (variables occurring in $\h v$ are $\{x_m:m<X(r) \}$)
 such that, for every $m<X(r) $,
\begin{align}\nonumber
 \min\PP_{x_m}(\h v)& =\min j(\PP_{x_m}(v)\cap (\cup_{r\leq r(n)}P_r))
  \\ \nonumber
  &=j(\min \PP_{x_m}(v))= X(0)+\cdots + X(r-1)+m.
  \end{align}
Thus,
\begin{align}\nonumber
\{\min  \PP_{x_m}(\h v):m<X(r)\}
= N_r
\end{align}
where $N_r = \{X(0)+\cdots +X(r-1),\cdots,
X(0)+\cdots+ X(r)-1\}$.
By definition of $f_{r(n)+1}$,
there exist $\v b_0,\v b_1\in d^{X(r)}$
such that
$$
f_{r(n)+1}(\h v(\v b_0))
\ne f_{r(n)+1}(\h v(\v b_1)).
$$

But $\h v(\v b) = v(\v b)\uhr \cup_{r\leq r(n)}P_r$
for all $\v b\in d^{X(r)}$.
Thus, combine with the definition of $c$ (see (\ref{combeq2})),
we have
\begin{align}\nonumber
&c( v(\v b_0))
\\ \nonumber
=&
f_{r(n)+1}( v(\v b_0)\uhr \cup_{r\leq r(n)}P_r)
 \\ \nonumber
 =& f_{r(n)+1}(\h v(\v b_0))
 \\ \nonumber
 \ne&
 f_{r(n)+1}(\h v(\v b_1))
 \\ \nonumber
 =& f_{r(n)+1}( v(\v b_1)\uhr \cup_{r\leq r(n)}P_r)
  \\ \nonumber
  =& c( v(\v b_1)).
\end{align}
Thus we are done.

For the general case where (\ref{combeq3})
is not necessarily true, we maintain,
at each time $t$ a finite set $W[t] = \{w_0<w_1<\cdots\}\subseteq \{0,\cdots, t-1\}$
such that (\ref{combeq3}) is true
for all $r< |W[t]|$. i.e.,
\begin{align}\label{combeq4}
&\text{for each $r< |W[t]|$,
there exists a $v_r\in\Psi_{w_r}^X[t]$ such that
$v_r$ contains $X(r) $ many}\\ \nonumber
&\text{variables whose first occurrence is after $|v_{r-1}|$.}
\end{align}
Moreover, we assume that
\begin{align}\label{combeq5}
\text{
for every $r<| W[t]|$,
$|v_r|<t$.}
\end{align}
If $W[t]=\emptyset$, define $c$ on $d^t$ arbitrarily.
Otherwise, define
$c$ on $d^t$ as
$$c(\v a) = f_{|W[t]|}(\v a\uhr \cup_{r< |W[t]|}P_r).$$

Note that as we argued before, if for some $w, t\in\omega$,
$W[\h t]\cap [0,w]  = W[t]\cap [0,w]$ for all $\h t\geq t$,
suppose $W[t]\cap [0,w] = \{w_0<w_1<\cdots<w_r<\cdots\}$,
we have that $v_r\in \Psi_{w_r}^X$ can not be extended
to a solution in a similar  sense of (\ref{combeq1}). i.e.,
\begin{align}\nonumber
&\text{there is no $X(r)$-variable word $v\succeq v_r$ with $|v|\geq t$  }
\\ \nonumber
&\text{that is monochromatic for $c$.}
\end{align}
Thus $c$ must diagonal against $\Psi^X_{w_r}$ if $\Psi^X_{w_r}$ is total.
By a simple priority argument, we can $X$-compute
a  sequence $(W[t]:t\in\omega)$ of finite sets such that
\begin{itemize}
\item property
(\ref{combeq4}) and (\ref{combeq5}) follow;
\item  $W[t]\cap [0,w]$ converges
as $t$ tends to infinity for all $w\in\omega$;
\item let $W$ be the limit of $W[t]$,
we have $W$ is maximal in the sense that
$\Psi_w^X$ is not total for all $w\notin W$.
\end{itemize}
Thus this means, for every $w$, either $c$ diagonal againsts $\Psi_w^X$
or $\Psi_w^X$ is not total.

At last we notice that the proof actually shows that for
an oracle $D$  such that $D'$ computes
some $X\in \opp_k^d$, there is a $D$-computable coloring
$c:d^{<\omega}\rightarrow k$ such that $c$ does not admit
$D$-computable $\msf{VWI}(d,k)$-solution
(see Theorem \ref{combth1}).
To see this, we $D$-compute
a  sequence $(W[t]:t\in\omega) $ of finite sets such that
for every $t$, let $W[t] = \{w_0<w_1<\cdots\}$,
we have
\begin{align}\nonumber
&\text{for each $r< |W[t]|$,
there exists a $v_r\in\Psi_{w_r}^{D}[t]$ such that
$v_r$ contains }\\ \nonumber
&\text{$X[t](r)$ many
variables whose first occurrence is after $|v_{r-1}|$.}
\end{align}

\ \\

(2)$\Rightarrow$(1):
Fix a $c:d^{<\omega}\rightarrow k$ that does not admit a
$c$-computable $\msf{VWI}(d,k)$-solution.
We will take advantage of some particular algorithms
$\Phi_0^c,\Phi_1^c,\cdots$ and show that
their failure (to compute a solution to $c$)
give rise to a sequence $X\in\opp_k^d$.
Roughly speaking,
the algorithms $\Phi_0^c,\Phi_1^c,\cdots$
 are greedy algorithms in the sense
 that
\begin{align}\nonumber
&\text{they extend their current computation
(which is a finitely long variable word)}\\ \nonumber
 &\text{whenever
  possible. i.e., }
  \\ \nonumber
  &\hspace{0.5cm}\Phi_{r+1}^c \text{ extends its current computation from }
  v
  \text{ to some }
\h v\succeq v
\\ \nonumber
&\hspace{0.5cm}\text{where } \h v\text{ has more variables than }v,
\\ \nonumber
&\hspace{0.5cm}\text{whenever it is found that }
  \text{for \emph{some} $\v a\in d^{|v_r|+1}$,
  $\h v/\v a$ is monochromatic for $c$}\\ \nonumber
  &\hspace{0.5cm}\text{where }v_r\text{ is the current computation of }\Phi_r^c.
  \end{align}
 Moreover, $\Phi_{r+1}^c$ will build its solution $v_{r+1}^*$ based on
  $\Phi_0^c,\cdots,\Phi_r^c$ in the sense that
  \begin{align}\nonumber
  &\text{all variables
  in $v_{r+1}^*$ occur after $|v_r|$ and}
  \\ \nonumber
  &\text{if some }\Phi_{\t r}^c
  \text{ extends its current computation,}
  \\ \nonumber
  &\text{then all }\Phi_r^c   \ (\text{where }r>\t r)
  \text{ will restart all over again}.
  \end{align}

Since $c$ does not admit a $c$-computable solution,
for every $r\in\omega$, the computation of $\Phi_r^c$
stucks at some $v_r$. More concretely,
\begin{itemize}
\item There is no  $\h v\succeq v_r$
with $\h v$ containing more variables than $v_r$
such that for some $\v a\in d^{|v_{r-1}|+1}$,
$\h v/\v a$ is monochromatic for $c$.
 \item All variables in $v_{r}$ occur after
 $|v_{r-1}|$
 (meaning $P_{x_m}(v_r)>|v_{r-1}|$ for all $m\in\omega$)
 and $|v_r|>|v_{r-1}|$.

\end{itemize}
Let $\h v_r$ be $\v v_r^\smallfrown x_{n_r-1}$,
where we assume that the variables occurring in $v_r$
are $\{x_0,\cdots,x_{n_r-2}\}$.
Then the first item transforms into:
\begin{align}\label{combeq6}
&\text{there is no }n_r\text{-variable word }
\h v\succeq \h v_r
\text{ such that
for some $\v a\in d^{|\h v_{r-1}|}$,}\\ \nonumber
&\h v/\v a\text{ is monochromatic for }c.
\end{align}
We show that $n_0n_1n_2\cdots\in\opp_k^d$.
Fix an $r\in\omega$, we need to construct an
$f:d^N\rightarrow k$ witnessing $\opp_k^d(n_0\cdots n_r)$
where $N = n_0+\cdots + n_r$.
For every $\v a\in d^N$, to define $f(\v a)$,
\begin{align}\nonumber
\text{we map $\v a $ to a word $\v{\h a} = h(\v a)\in d^{|\h v_r|}$
and let $f(\v a) = c(\v{\h a})$.}
\end{align}
Intuitively, $h$ is defined by connecting each element of $N$
to a set of elements of $|\h v_r|$,
namely  $\PP_{x_m}(\h v_s)$ for some
$m,s\in \omega$.
Suppose $\v  a = \v a_0\cdots \v a_r$ where $|\v a_s| = n_s$
for all $s\leq r$.
Let
\begin{align}\nonumber
&\v{\h a}_s = \h v_s(\v a_s)\uhr_{|\h v_{s-1}|}^{|\h v_s|-1}
\text{ and }
h(\v a) = \v{\h a}_0\cdots\v{\h a}_r.
\end{align}
Let $s\leq r$ and $\t v$ be a $n_s$-variable word
such that $\{\min \PP_{x_m}(\t v):m\in\omega\}
 = N_s$ where
 $N_s = \{X(0)+\cdots +X(s-1),\cdots,
 X(0)+\cdots +X(s)-1\}$.
 Without loss of generality, we assume that the
 variables occurring in $\t v$ are the same as that of
 $\h v_s$; moreover, the order of  their occurrence is also identical.
Note that $h$ naturally extends to a map
defined on any sequence of length $N$.
Consider the variable word
$\h v= h(\t v)$
and note that $\h v\uhr_{|\h v_{s-1}|}^{|\h v_s|-1} =
\h v_s\uhr_{|\h v_{s-1}|}^{|\h v_s|-1}$.
Therefore, since  variables in $\h v_{\h s}$ occurs
after $|\h v_s|-1$ for all $\h s>s$,
let $\v a = \h v_s\uhr |\h v_{s-1}|$,
we have
\begin{align}\nonumber
\h v/\v a\text{
is an }n_s\text{-variable word extending }
\h v_s.
\end{align}
By (\ref{combeq6}),
there are $\v b_0,\v b_1\in d^{n_s}$
such that $c(\h v(\v b_0))\ne c(\h v(\v b_1))$.
Since $h(\t v(\v b)) = \h v(\v b)$, we have
\begin{align}
f(\t v(\v b_0))
& = c(h(\t v(\v b_0)))
\\ \nonumber
&  = c(\h v(\v b_0))
\\ \nonumber
&\ne
c(\h v(\v b_1))\\ \nonumber
& = c(h(\t v(\v b_1)))
\\ \nonumber
&= f(\t v(\v b_1)).
\end{align}
Thus we are done.

It worth mention that $c$
does not necessarily compute
$n_0n_1\cdots$ since $c$ can not
decide whether $\Phi_r^c$ will ever
extend its current computation.
However, we have that $c'$ computes
$n_0n_1\cdots$
(see Theorem \ref{combth1}).

\end{proof}

As we mentioned in the proof of Theorem \ref{combth0},
we actually have the following.
\begin{theorem}\label{combth1}
The following classes of oracles are identical:
\begin{itemize}
\item $\big\{
D\subseteq \omega:D' \text{ computes a
member in }\opp_k^d
\big\}.$

\item $\big\{
D\subseteq \omega:D \text{ computes a
 }\msf{VWI}(d,k)\text{-instance }c$
  that does not admit
$c$-computable solution
$\big\}.$
\end{itemize}

\end{theorem}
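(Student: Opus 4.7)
Theorem \ref{combth1} is the relativization of Theorem \ref{combth0}, and both containments are already sketched at the end of the respective parts of the proof of Theorem \ref{combth0}. The plan is to make those observations into formal arguments.

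For the forward containment, suppose $D'$ computes some $X \in \opp_k^d$. By the Shoenfield limit lemma, fix a $D$-computable approximation $(X[t])_{t \in \omega}$ with $\lim_t X[t](r) = X(r)$ for every $r$. Run the construction from (1)$\Rightarrow$(2) of Theorem \ref{combth0} relativized to $D$: at stage $t$, inspect $\Psi_0^{D}[t], \Psi_1^{D}[t], \ldots$ and $D$-compute the finite set $W[t]$ satisfying (\ref{combeq4}) and (\ref{combeq5}) with $X[t]$ in place of $X$; then define $c$ on $d^t$ by $c(\v a) = f_{|W[t]|}(\v a\uhr\bigcup_{r<|W[t]|}P_r)$, where $f_{m}$ is a witness of $\opp_k^d(X[t]\uhr m)$. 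The main obstacle here is that $X[t]$ may initially be wrong, so early stages may install invalid members of $W[t]$ or use inappropriate witnesses $f_m$; however, for every $w$ both $X[t]\uhr w$ and $W[t]\cap[0,w]$ stabilize after finitely many stages, after which the unrelativized priority argument of Theorem \ref{combth0} applies verbatim and produces a $D$-computable $c$ that diagonalizes against every total $\Psi_w^D$. In particular $c$ admits no $D$-computable and hence no $c$-computable $\msf{VWI}(d,k)$-solution.

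For the reverse containment, suppose $D$ computes a $\msf{VWI}(d,k)$-instance $c$ with no $c$-computable solution. Relativize the greedy algorithms $\Phi_r^c$ from (2)$\Rightarrow$(1) of Theorem \ref{combth0} to $c$. Their stuck configurations $v_r$, and hence the integers $n_r$ defined from them, are characterized by the $\Pi^0_1(c)$ assertion that no further variable-extending $\h v$ ever appears in the enumeration of $\Phi_r^c$; thus $n_0 n_1 \cdots$ is uniformly $c'$-computable. Since $c \leq_T D$ implies $c' \leq_T D'$, the sequence is $D'$-computable, and the colorings $f : d^N \to k$ already constructed in the proof of Theorem \ref{combth0} certify $n_0 n_1 \cdots \in \opp_k^d$.
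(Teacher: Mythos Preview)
Your proposal is correct and follows exactly the approach the paper itself takes: the paper gives no separate proof of Theorem~\ref{combth1}, but sketches both containments in the final paragraphs of the two halves of the proof of Theorem~\ref{combth0} (using the approximation $X[t]$ in the forward direction and observing that the stuck configurations $v_r$ are $c'$-computable in the reverse direction), and your write-up simply makes those sketches explicit via the limit lemma and the finite-injury argument.
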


\begin{remark}
In Theorem \ref{combth0}, direction (2)$\Rightarrow$(1)
is particularly interesting.
It is an example that a computability theoretic
assertion has some corollary outside computability theory.

On the other hand, experience in computability theory
gives us some clue about the complexity of the members in
$\opp_k^d$ (without analyzing $\opp_k^d$).
It suggests that if $\opp_k^d$ is nonempty, then $\opp_k^d$  admits
a $\emptyset'$-computable member. This is because
almost all computability theoretic assertions relativize.
Therefore, if there is a $\msf{VWI}(d,k)$-instance
$c$ that does not admit a $c$-computable solution,
then it is likely that such $c$  can be chosen to be
computable. Thus, by Theorem \ref{combth1},
if $\opp_k^d$ is nonempty, then  it is likely that
$\opp_k^d$ admits
a $\emptyset'$-computable member.

\end{remark}

\section{On the $\opp_k^d$ property}

\label{combsec22}

It turns out that disproving $\opp_k^d$ on
certain sequences  is equivalent
to Hales-Jewett theorem (see Remark \ref{combremark0}).
For $d,k,n\in\omega$,
let $HJ(d,k,n)$ denote the assertion
that
\begin{align}\label{combeq7}
&\text{there exists an }N\text{ such that for every }
c:d^N\rightarrow k, \\ \nonumber
&
\text{there exists an }n\text{-variable word }v
\text{   of length }N\text{ that is monochromatic for  }c.
\end{align}
 Hales-Jewett theorem states that
\begin{theorem}[Hales-Jewett theorem]
For every $d,k,n\in\omega$, $HJ(d,k,n)$ holds.
\end{theorem}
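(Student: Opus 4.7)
The plan is to prove Hales--Jewett theorem by induction on the alphabet size $d$, reducing the general $n$-dimensional case to the one-dimensional case via an iterated product construction. The base case $d=1$ is immediate: $d^N$ is a singleton, so any $n$-variable word of length $n$ is trivially monochromatic and $HJ(1,k,n)$ holds with $N=n$. The case $n=0$ is also trivial (there is nothing to show). The interesting content is the inductive step on $d$, which I would split into two parts.

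\emph{Reduction from $n$ dimensions to one dimension.} Assuming $HJ(d,k',1)$ for all $k'$, I would prove $HJ(d,k,n)$ by induction on $n$. Suppose $HJ(d,k,n-1)=N'$. Associate with each $\v a\in d^{N_0}$ the induced coloring $\v b \mapsto c(\v a\v b)$ of $d^{N'}$; this yields a $k^{d^{N'}}$-coloring of $d^{N_0}$. Choose $N_0=HJ(d,k^{d^{N'}},1)$ and apply the one-dimensional case to obtain a $1$-variable word $v_0$ of length $N_0$ on which the induced coloring is constant. Applying $HJ(d,k,n-1)$ to this common induced coloring in the second block gives an $(n-1)$-variable word $v'$ of length $N'$. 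After renaming variables so the symbol sets of $v_0$ and $v'$ are disjoint, the concatenation $v_0{}^\smallfrown v'$ is an $n$-variable word of length $N_0+N'$ monochromatic for $c$.

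\emph{The one-dimensional case $HJ(d,k,1)$.} Here I would use the inductive hypothesis $HJ(d-1,\cdot,1)$ via the classical line-focusing argument. Pick $N=N_1+\cdots+N_k$ with $N_i$ chosen in reverse order so that $N_i\geq HJ\bigl(d-1,\ k^{d^{N_{i+1}+\cdots+N_k}},\ 1\bigr)$. Given $c:d^N\rightarrow k$, view $d^N$ as $d^{N_1}\times\cdots\times d^{N_k}$; block by block, fixing the outer coordinates induces a coloring of $d^{N_i}$ with at most $k^{d^{N_{i+1}+\cdots+N_k}}$ colors, and the inductive hypothesis yields a $1$-variable word $w_i$ of length $N_i$ over the reduced alphabet $\{0,\dots,d-2\}$ which is \emph{focused} in the sense that substituting its variable by any value in $\{0,\dots,d-2\}$ produces the same color regardless of the outer coordinates. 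Form $k+1$ points $P_0,P_1,\dots,P_k$ of $d^N$, where $P_j$ uses $w_i$ with variable value $0$ for $i\leq j$ and $d-1$ for $i>j$. By the focusing property, $c(P_0),\dots,c(P_k)$ is well defined, so by pigeonhole $c(P_i)=c(P_j)$ for some $i<j$. Gluing the $w_i$ in blocks $i+1,\dots,j$ into a single variable ranging over the entire alphabet $\{0,\dots,d-1\}$, with $w_{i'}$ ($i'\leq i$) contributing its variable set to $0$ and $w_{i'}$ ($i'>j$) contributing its variable set to $d-1$, produces a monochromatic combinatorial line of $d^N$.

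\emph{Main obstacle.} The main difficulty lies in the one-dimensional step: keeping the focusing bookkeeping consistent across all $k$ blocks, so that the $k+1$ auxiliary points really do have their colors determined by the block-wise focused property, and so that two of them can be coherently joined into one combinatorial line over the full alphabet. The iterated exponentials $k^{d^{\,\cdot}}$ make the resulting bound on $N$ enormous, but it remains primitive recursive, essentially as in Shelah's refinement of the original Hales--Jewett argument; verifying that the choice of $N_i$ in reverse order really admits the simultaneous focusing of all $k$ blocks is the combinatorial crux.
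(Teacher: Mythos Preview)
Your outline is correct and is essentially Shelah's primitive-recursive proof of the Hales--Jewett theorem: induction on the alphabet size $d$, with the one-dimensional step carried out by the color-focusing argument across $k$ blocks, and the passage from dimension $1$ to dimension $n$ handled by the standard product/iteration. The bookkeeping you flag as the ``main obstacle'' does work once you make explicit that, when processing block $i$, the earlier blocks $1,\dots,i-1$ are already frozen at $w_1(0),\dots,w_{i-1}(0)$; the focusing then guarantees that changing the variable of $w_m$ (for $i<m\le j$) among $\{0,\dots,d-2\}$ does not change the color, so your $k+1$ pigeonholed points really do glue into a monochromatic combinatorial line.

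However, note that the paper does \emph{not} give its own proof of this theorem: it states Hales--Jewett and immediately refers the reader to \cite{dodos2016ramsey}. The theorem is used only as a black box in the proof of Proposition~\ref{comblem0}, and the paper's own contribution lies elsewhere (the equivalence in Theorem~\ref{combth0} and the generalization in Theorem~\ref{combth2}). So there is nothing to compare your argument against here; you have supplied a proof where the paper deliberately outsources one. If you want your write-up to be self-contained, the Shelah argument you sketch is exactly the right choice, and indeed the paper alludes to it in the introduction when discussing \cite{liu2019computable}.
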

See \cite{dodos2016ramsey} for a proof of Hales-Jewett theorem.
Using Hales-Jewett theorem, we prove the following
Proposition \ref{comblem0}, which also directly  implies
Hales-Jewett theorem.
When there is no ambiguity, 
we use $\nr$ to denote $n\cdots n$ ($r$ many).
\begin{proposition}\label{comblem0}
For every $d,k,n\in\omega$, there exists an $r$
such that $\opp^d_k(\nr)$ does not hold.
\end{proposition}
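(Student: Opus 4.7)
The plan is to reduce the proposition to the one-dimensional Hales-Jewett theorem on a coarsened alphabet. The blocks $N_0,\ldots,N_{r-1}$ of Definition \ref{combdef0} partition $\{0,\ldots,rn-1\}$ into $r$ segments of size $n$, so $d^{rn}$ is canonically in bijection with $\Sigma^{r}$, where $\Sigma := d^n$ is viewed as a single alphabet of size $d^n$. Under this bijection, any coloring $c:d^{rn}\to k$ becomes a $k$-coloring of $\Sigma^{r}$. I take $r$ to be any integer witnessing $HJ(d^n,k,1)$, which exists by the Hales-Jewett theorem; this guarantees that every $k$-coloring of $\Sigma^{r}$ admits a monochromatic $1$-variable word of length $r$ over $\Sigma$.

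Given $c:d^{rn}\to k$, I apply the choice of $r$ to the induced coloring on $\Sigma^{r}$ and obtain a monochromatic $1$-variable word $w$ over $\Sigma$ of length $r$, whose (single) variable $y$ occurs at positions in some nonempty set $S\subseteq \{0,\ldots,r-1\}$. Set $s:=\min S$. I then unfold $w$ back to a word of length $rn$ over $d$ by (i) expanding every constant letter $\vec{c}_i\in \Sigma=d^n$ into its $n$ coordinates, and (ii) replacing every occurrence of $y$ by the same ordered tuple $x_0 x_1\cdots x_{n-1}$ of $n$ fresh variables. This produces an $n$-variable word $v$ over $d$ of length $rn$ with the property that $v(\vec{a})$ is the coordinate-by-coordinate unfolding of $w(\vec{a})$ for every $\vec{a}\in d^n$; in particular $v$ is monochromatic for $c$. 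The first occurrence of $x_m$ in $v$ lies at position $sn+m$, so $\{\min \PP_{x_m}(v):m\in\omega\}=N_s$, which is exactly the \sectionhomo\ condition at block $s$; hence $\opp_k^d(n_r)$ fails.

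I do not expect any substantial obstacle. The only point worth verifying carefully is that, when $y$ occurs at more than one position of $w$, the unfolded variables $x_0,\ldots,x_{n-1}$ also appear in blocks $N_{s'}$ for $s'\in S\setminus\{s\}$, yet the set of first occurrences of the variables in $v$ nevertheless remains exactly $N_s$. This holds because every occurrence of $y$ in $w$ is unfolded using the same ordered $n$-tuple of variables, so the first occurrence of $x_m$ in $v$ is governed solely by the earliest position $s$ of $y$ in $w$.
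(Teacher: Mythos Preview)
Your proposal is correct and follows essentially the same approach as the paper: both reduce the claim to $HJ(d^n,k,1)$ via the block-wise bijection $d^{rn}\cong(d^n)^r$, apply the one-dimensional Hales--Jewett theorem on the coarsened alphabet $\Sigma=d^n$, and then unfold the resulting monochromatic $1$-variable word over $\Sigma$ into an $n$-variable word over $d$ whose first occurrences form the block $N_s$ indexed by the earliest position of the variable. The paper only writes out the case $d=2$, $n=2$ explicitly and remarks that the general case is similar; your write-up carries out the general argument directly but is otherwise the same.
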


\begin{proof}
We prove the Lemma for $d= 2,n=2$. The general case follows similarly.
By $HJ(4,k,1)$, let $r$ be a witness (as $N$ in (\ref{combeq7})).
We show that $\opp_k^2(\2r)$ does not hold.
This is done by coding the space $2^{2r}$ into $4^r$ in the obvious way
where $\v a(2 t)\v a(2t+1)$ is coded into $\v{\h a}(t)$.
i.e., for every $\v a\in 2^{2r}$, let $\v{\h a} = h(\v a)\in 4^r$
be such that $\v{\h a}(t) = 0,1,2,3$ respectively
depending on   $\v a(2t)\v a(2t+1)=00,01,10,11$ respectively.
Given a coloring $c:2^{2r}\rightarrow k$, consider
$$\h c:4^r\ni \v{\h a}\mapsto c(h^{-1}(\v{\h a}))\in k$$
and suppose (by  definition of $r$) $\h v$ is a $1$-variable word (over $4$) of length $r$
that is monochromatic for $\h c$.  Without loss of generality, suppose $x_0$
occurs in $\h v$.
We transform $\h v$ back to a $2$-variable word (over $2$) $v$
 of length $2r$ such that
$$v(2t)v(2t+1) = 00,01,10,11,x_0x_1
\text{ respectively if }\h v(t) = 0,1,2,3,x_0
\text{ respectively}.
$$

Note that for every $\v a\in 2^2$,
there exists an $\v{\h a}\in 4$ such that
$ \h v(\v{\h a})  =h( v(\v a))$.
Thus, $v$ is monochromatic for $c$.
Moreover, it's clear
that $\{\min \PP_{x_m}(v):m\in\omega\}
 = \{2t^*,2t^*+1\}$ where $ t^* = \min \PP_{x_0}(\h v)$.
 Thus we are done.

\end{proof}
\begin{remark}\label{combremark0}
Note that
given $d,k,n$, the existence of $r$ (as Lemma \ref{comblem0}) clearly implies $HJ(d,k,n)$
where the witness $N$ could be $n\cdot r$. i.e.,
the assertion $n^\omega\notin \opp_k^d$ for all $d,k,n$ is equivalent to Hales-Jewett theorem.
Thus the assertion $\opp_k^d=\emptyset$ for all $d,k$ is a generalization of Hales-Jewett theorem.
\end{remark}

One possible way to show that $\opp_k^d\ne\emptyset$
is by proving that if $n_0\cdots n_{r-1}\in\opp_k^d$, then
there exists an $n$ such that $n_0\cdots n_{r-1} n\in \opp_k^d$.
However, this clearly doesn't work.
\begin{proposition}
For every $n$,
$\opp_2^2(1n)$ does not hold.

\end{proposition}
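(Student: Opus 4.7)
The plan is to argue by contradiction: suppose $f\colon 2^{n+1}\to 2$ witnesses $\opp_2^2(1n)$. I would first translate the definition of \sectionhomo\ for the sequence $1n$ into concrete combinatorial statements about $f$, and then exploit only the instances where the singleton set $S$ (the set of coordinates occupied by the variable) has size $1$ or $2$.

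For the section index $s=1$, the constraint $\{\min P_{x_m}(v):m\in\omega\}=\{1,\dots,n\}$ on an $n$-variable word $v$ of length $n+1$ forces $v=a\,x_0x_1\cdots x_{n-1}$ for some $a\in\{0,1\}$, because $n$ distinct first occurrences must fill all of $\{1,\dots,n\}$; monochromaticity then says $f(a,\cdot)$ is constant on $2^n$. For $s=0$, a $1$-variable word $v$ of length $n+1$ with the only variable $x_0$ first appearing at coordinate $0$ is determined by the set $S:=P_{x_0}(v)\ni 0$ together with a bit-assignment on $\{0,\dots,n\}\setminus S$; monochromaticity is then the statement that filling $S$ entirely with $0$s, respectively $1$s, produces the same $f$-value.

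From the assumption that $f$ witnesses $\opp_2^2(1n)$ I would then extract three facts. (A) Failure of case $s=1$ yields that neither $f(0,\cdot)$ nor $f(1,\cdot)$ is constant on $2^n$. (B) Failure of case $s=0$ for $S=\{0\}$, taken over all assignments $\v b\in 2^n$ to coordinates $1,\dots,n$, gives $f(0,\v b)\neq f(1,\v b)$ for every $\v b$, so $f(1,\cdot)=1-f(0,\cdot)$. (C) For each coordinate $i\in\{1,\dots,n\}$, failure of case $s=0$ for $S=\{0,i\}$ with any bit-assignment on the remaining coordinates says that simultaneously flipping positions $0$ and $i$ always changes $f$; substituting (B) into this inequality collapses it to the equation saying $f(0,\cdot)$ does not depend on coordinate $i$.

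Since (C) holds for every $i\in\{1,\dots,n\}$, the restriction $f(0,\cdot)$ is constant on $2^n$, contradicting (A). The main obstacle is not combinatorial but notational: one must carefully read off from Definition~\ref{combdef0} the two concrete shapes of $v$ described above, and in particular verify that in the $s=1$ case the $n$ first-occurrence constraints force $v$ to be a single fixed bit followed by $x_0x_1\cdots x_{n-1}$. Once that translation is in hand, the argument uses nothing beyond the sets $S$ of size one and two and is immediate.
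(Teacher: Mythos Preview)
Your proof is correct, but it takes a different route from the paper's. The paper argues constructively: assuming without loss of generality $f(0\cdots 0)=0$, either $f(0,\cdot)$ is identically $0$ (and the $s=1$ word $0\,x_0\cdots x_{n-1}$ works), or there is some $\vec a$ with $\vec a(0)=0$ and $f(\vec a)=1$; then one simply checks $f(1\cdots 1)$ and in either case writes down an explicit $s=0$ monochromatic $1$-variable word (either $x_0x_0\cdots x_0$, or the word with $x_0$ placed exactly on $\vec a^{-1}(0)$ and $1$ elsewhere). So the paper uses a single large set $S=\vec a^{-1}(0)\ni 0$ rather than all the two-element sets. Your argument instead exhausts the $s=0$ instances with $|S|\le 2$: the $|S|=1$ cases force $f(1,\cdot)=1-f(0,\cdot)$, and then each $|S|=2$ case shows $f(0,\cdot)$ is independent of one coordinate, so $f(0,\cdot)$ is constant, contradicting (A). Both are short; the paper's version is the prototype of what it later calls the ``\search\ argument'' (cf.\ Lemma~\ref{comblem1}) and is tailored to be reused, whereas your approach is a clean self-contained contradiction that only touches the smallest $s=0$ instances.
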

\begin{proof}
Fix an $f: 2^{n+1}\rightarrow 2$.
Without loss of generality, assume that
$f(0\cdots 0)  = 0$.
Suppose there is a  $\v a\in 2^{n+1}$
with $\v a(0) = 0$
such that $f(\v a) = 1$ (otherwise the variable word
$v = 0x_0\cdots x_{n-1}$ is monochromatic for
$f$ in color $0$).
If $f(1\cdots 1) = 0$,
then the variable word $v = x_0\cdots x_0$
is monochromatic for $f$ in color $0$.
Otherwise, let $v$ be the $1$-variable word
such that $v(0) = \v a, v(1) = 1\cdots 1$,
we have $v$ is monochromatic for $f$.
Thus we are done.

\end{proof}

We refer to the  argument where we show that there is a
$\v a$ with $\v a(0) = 0$ so that $f(\v a) = 1$ as \emph{\search\ argument}.
More generally, by \search\ argument, we have:
\begin{lemma}\label{comblem1}
Let $n_0\cdots n_r$ be a sufficiently long positive integer sequence
that is not \sectionhomo\ for $f$.
Then there exist
$(\v a_m\in 2^N: m<n_0)$
such that $ \v a_m^{-1}(1)$ are mutually disjoint and
$f(\v a_m) = 0, m\in \v a^{-1}_m(1)$ for all $m<n_0$.

\end{lemma}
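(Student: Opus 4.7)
The plan is to construct $\v a_0, \ldots, \v a_{n_0-1}$ in order via an iterative searching argument modeled on the case analysis in the preceding proposition on $\opp_2^2(1n)$. Assume, without loss of generality (by relabeling colors), that $f(0\cdots 0) = 0$, so ``color $0$'' is the color we aim the $\v a_m$'s at. At step $m$, having built $\v a_0, \ldots, \v a_{m-1}$, set $S_m = \bigcup_{m' < m} \v a_{m'}^{-1}(1)$ and impose the strengthened invariant that $\v a_{m'}|_{N_0 \setminus \{m'\}} = 0$ for each $m' < m$; this forces $S_m \cap N_0 = \{0, 1, \ldots, m-1\}$, so every later position of $N_0$ remains free. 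Search for $\v a_m \in 2^N$ satisfying $\v a_m|_{S_m \cup (N_0 \setminus \{m\})} = 0$, $\v a_m(m) = 1$, and $f(\v a_m) = 0$.

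If such $\v a_m$ exists, take it: its $1$-support is automatically disjoint from $S_m$ (hence from every earlier $\v a_{m'}^{-1}(1) \subseteq S_m$), and the strengthened invariant is preserved. The crux is to show the search always succeeds. Assume for contradiction that no such $\v a_m$ exists; then every $\v a$ in the affine slice $A_m = \{\v a \in 2^N : \v a|_{S_m \cup (N_0 \setminus \{m\})} = 0,\ \v a(m) = 1\}$ has $f(\v a) \neq 0$. Using that the sequence is sufficiently long, choose a section index $s \geq 1$ with $N_s$ disjoint from $S_m$ and $n_s$ as large as needed, and define the variable word $v$ of length $N$ by $v|_{S_m \cup (N_0 \setminus \{m\})} = 0$, $v(m) = 1$, $v$ listing $x_0, \ldots, x_{n_s - 1}$ in order on $N_s$, and $v = 0$ elsewhere. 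Then $v$ is an $n_s$-variable word with first occurrences exactly $N_s$, and $v(\v b) \in A_m$ for every $\v b \in 2^{n_s}$, so $\v b \mapsto f(v(\v b))$ is a $(k-1)$-coloring of $2^{n_s}$ avoiding color $0$.

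From this $(k-1)$-coloring one must extract a sub-variable-word of $v$ that is monochromatic for $f$ and whose first occurrences fill some $N_{s^*}$ \emph{exactly}, contradicting non-\sectionhomo ity of $f$. The tool is Hales-Jewett applied to the $(k-1)$-coloring inside $2^{n_s}$, combined with careful use of ``sufficiently long'': pick $s$ and $n_s$ so that a later section size $n_{s^*}$ in the sequence coincides with the HJ-output dimension for $(k-1)$-colorings, extract an $n_{s^*}$-variable word in $2^{n_s}$ monochromatic for $f \circ v$, and substitute it back into the $N_s$-block of $v$ to produce an $n_{s^*}$-variable word $v^*$ in $2^N$ whose first occurrences lie in $N_s$. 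After re-indexing the first occurrences of $v^*$ to fill exactly $N_{s^*}$ (again leveraging ``sufficiently long'' to reserve an appropriate later section with matching size), we obtain the contradictory monochromatic variable word.

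The main obstacle is precisely this final alignment: plain Hales-Jewett yields a monochromatic variable word of some controlled dimension but with arbitrarily positioned first occurrences, whereas the non-\sectionhomo ity hypothesis can only be applied against variable words whose first occurrences \emph{fill} some $N_{s^*}$. Closing this gap requires either a first-occurrence-controlled strengthening of HJ (provable by a nested HJ application) or a multi-section iteration in which the quantitative threshold for ``sufficiently long'' is tuned so that some actual $n_{s^*}$ in the given sequence matches the required HJ dimension. Managing this combinatorial bookkeeping cleanly is the principal technical difficulty.
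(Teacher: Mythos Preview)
Your iterative searching set-up is exactly the paper's intended argument, and your variable word $v$ (with $v(m)=1$, variables filling $N_s$, zeros elsewhere) is the right object. But you then manufacture a difficulty that is not there. The lemma lives in the $k=2$ setting (it is introduced as the general form of the ``\search\ argument'' from the $\opp_2^2(1n)$ proposition, and the paper notes that $r\geq n_0$ suffices), so when the search for $\v a_m$ fails, every $\v b\in 2^{n_s}$ gives $f(v(\v b))\ne 0$, i.e.\ $f(v(\v b))=1$. Your $v$ is therefore \emph{already} monochromatic for $f$ with first occurrences exactly $N_s$, which contradicts non-\sectionhomo ity on the spot. There is no $(k-1)$-coloring to analyze, no Hales--Jewett step, and no alignment obstacle: the ``main obstacle'' you describe only arises because you are implicitly trying to prove the lemma for general $k$, which is not what is being asked (and would indeed require a much larger ``sufficiently long'').

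Two minor corrections. First, drop the ``without loss of generality $f(0\cdots0)=0$'' step: the conclusion names color $0$ explicitly, so relabeling colors changes the statement. The argument does not need $f(0\cdots0)=0$ anyway --- the contradiction comes from $v$ being monochromatic in color $1$, independent of the value at the all-zero string. Second, rather than tracking a growing $S_m$ and hunting for a fresh section disjoint from it, it is cleaner (and matches the paper's bound $r\ge n_0$) to simply reserve section $N_{m+1}$ for $\v a_m$: require $m\in \v a_m^{-1}(1)\subseteq\{m\}\cup N_{m+1}$, and disjointness is automatic.
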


For Lemma \ref{comblem1}, to be sufficiently long, it suffices that
$r\geq n_0$. However, in the following subsection, to be sufficiently long
could be very complex. In some cases, we reduce a stronger
statement to a weaker one, and to be sufficiently long in that stronger
statement depends on the weaker statement. For example,
 to be sufficiently long in  that
stronger statement means there are sufficiently many mutually disjoint subsequences of
$n_0\cdots n_r$ so that each is sufficiently long in the sense of that weaker statement.
Since where these subsequences locate do not matter,
therefore, to avoid unnecessary complication and an ocean of notation,
we use the term sufficiently long.
In this term, $\opp_k^d=\emptyset$ can be stated as for every sufficiently
long $n_0\cdots n_r$, every $f:d^N\rightarrow k$,
$n_0\cdots n_r$ is \sectionhomo\ for $f$.

\subsection{A generalization of Hales-Jewett theorem on $d=2,k=2,n=2$.}
\label{combsec2}
Our goal of this subsection is to prove the following generalization
of Hales-Jewett theorem on the parameters $d=2$.
\def\02{[0,2]}
\begin{theorem}\label{combth2}
For every $\omega$-sequence of positive integers
$n_0n_1\cdots$ with $n_0=2$,
$n_0n_1\cdots\notin \opp_2^2$.
\end{theorem}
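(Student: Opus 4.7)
The plan is to proceed by contradiction. Assume that some $\omega$-sequence $n_0 n_1 n_2 \cdots$ with $n_0 = 2$ lies in $\opp_2^2$. Then for every $r$ there is a coloring $f_r: 2^{N_r} \rightarrow 2$ (where $N_r = n_0 + \cdots + n_{r-1}$) witnessing that $n_0 \cdots n_{r-1}$ is not \sectionhomo{}. I would take $r$ (depending on $n_1, n_2, \ldots$) to be large enough that the iterated searching-style arguments below have sufficient room to operate, and derive a contradiction. Throughout, the freedom to enlarge $r$ is what ``sufficiently long'' means in the sense explained after Lemma \ref{comblem1}.

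\textbf{Step 1 (Searching on section $0$).} Since $n_0 = 2$, Lemma \ref{comblem1} applied to $f_r$ yields two vectors $\v a_0, \v a_1 \in 2^{N_r}$ with disjoint supports, satisfying $f_r(\v a_0) = f_r(\v a_1) = 0$ and $\v a_0(0) = \v a_1(1) = 1$. These encode the failure of the candidate $2$-variable word at section $N_0 = \{0, 1\}$: any attempt to place $x_0, x_1$ at positions $0, 1$ and fill the rest with constants must land on one of these ``bad'' configurations. I would then extract from $f_r$ a derived coloring $\tilde f$ on a combinatorial subspace induced by the disjoint supports of $\v a_0, \v a_1$, carrying the information that $f_r$ takes value $1$ somewhere in each such subspace.

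\textbf{Step 2 (Iterate through later sections).} For each $s \geq 1$, repeat a search argument adapted to section $N_s$: either a valid $n_s$-variable word with first occurrences exactly at $N_s$ is monochromatic for $f_r$ (giving the contradiction), or one obtains $n_s$ disjointly-supported vectors recording failure. The crucial observation is that for sections beyond the first, $n_s$ can be chosen as large as needed, so the constraints produced can be made arbitrarily rich. I would arrange these constraints into a single auxiliary coloring on $2^M$ for some $M$ much smaller than $N_r$, whose coordinates encode the joint choices across sections.

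\textbf{Step 3 (Reduction to Hales--Jewett).} With all search-argument outputs in hand, I would apply $HJ(4, 2, 1)$ (or $HJ(4, 2, n)$ for suitable $n$) to the auxiliary coloring, in the same spirit as the reduction in Proposition \ref{comblem0} where pairs of coordinates of $2^{2r}$ were coded into letters of $4^r$. The monochromatic $1$-variable word produced by Hales--Jewett, once pulled back through the coding, translates into a genuine $n_s$-variable word over $2$ of length $N_r$ whose first occurrences are forced to fill a whole section $N_s$ because of the disjoint-support structure extracted in Steps 1--2. This variable word is monochromatic for $f_r$, contradicting the assumption that $f_r$ witnesses $\opp_2^2(n_0 \cdots n_{r-1})$.

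\textbf{Main obstacle.} The hard part is aligning two rigid structures: the disjoint-support vectors produced by the searching argument, and the requirement that the first occurrences of variables fill \emph{exactly} some $N_s$. A raw Hales--Jewett application gives a monochromatic variable word whose variable positions are arbitrary, not necessarily packed into a single section. Engineering the coding in Step 3 so that the monochromatic word's first-occurrence set coincides with an $N_s$ --- rather than being scattered across several sections --- is the central combinatorial challenge, and it is here that the restriction $n_0 = 2$ plays an essential role by limiting the alphabet size of the auxiliary Hales--Jewett instance to $4 = 2^{n_0}$.
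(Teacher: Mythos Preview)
Your proposal is not a proof but a sketch whose central step is left unresolved, and the intended route does not match the paper's.

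The decisive gap is Step~3. The coding you allude to (grouping pairs of bits into letters of a $4$-ary alphabet, as in Proposition~\ref{comblem0}) works precisely because \emph{all} blocks have the same size: a $1$-variable word over $4$ of length $r$ unwinds to a $2$-variable word over $2$ of length $2r$ whose two first occurrences sit in a single block $N_s$. For a non-constant sequence $2,n_1,n_2,\ldots$ there is no analogous coding: a monochromatic combinatorial line from $HJ(4,2,1)$ will still only give you a $2$-variable word, whereas \sectionhomo{}ity at section $s\geq 1$ demands an $n_s$-variable word whose first occurrences fill all of $N_s$. Your ``Main obstacle'' paragraph names exactly this alignment problem and then stops; nothing in Steps~1--2 produces the structure needed to force the pulled-back word into a single section. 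So as written the argument does not close.

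The paper's proof takes a different route and does not invoke Hales--Jewett at all. It first reformulates the problem (Theorem~\ref{combth3}): writing $f_{\v b}(\v a)=f(\v b\,\v a)$ for $\v b\in 2^2$, one seeks a $[0,2]$-variable word $v$ with $f_{\v b}(v(\v b))$ constant in $\v b$ --- this is where $n_0=2$ enters, by fixing the number of auxiliary colorings to four. Then a reduction lemma (Lemma~\ref{comblemred}) repeatedly restricts the joint behaviour of the four $f_{\v b}$'s along pairs of ``edges'', landing in one of three explicit cases; each case is dispatched by short ad~hoc arguments (Lemmas~\ref{comblem2}--\ref{comblem10}) that build the required variable word directly from disjoint-support witnesses. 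The searching argument you cite in Step~1 is indeed a basic ingredient, but it feeds the case analysis rather than a Hales--Jewett reduction.
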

As explained in Remark
\ref{combremark0},
this generalizes $HJ(2,2,2)$.
For convenience, we prove the
following equivalent version of Theorem \ref{combth2}.
In this section,
a $[0,n]$-variable word $v$ is a variable word
such that the variables occurring in $v$
is a subset of $\{x_0,\cdots,x_{n-1}\}$.
Abusing the notation,
we write $v(\v a)$ for the string obtained by substituting $x_m$
by $\v a(m)$ for all $m<n$.
For example, let variable word $v = 0x_10$ and $\v a=01$,
then $v(\v a) = 010$
(although Definition \ref{combdef1} gives $v(\v a) = 000$).

\begin{theorem}
\label{combth3}
Let
$n_0\cdots n_r$ be sufficiently long
and colorings $f_{00},f_{01},f_{10},f_{11}: 2^N\rightarrow 2$.
Then,
either
$n_0\cdots n_r$ is  \sectionhomo\ for
  $f_{\v b}$ for some $\v b\in 2^2$;
or there exists a $\02$-variable word $v$
such that $f_{\v b}(v(\v b)) = f_{\v{\h b}}(v(\v{\h b}))$
for all $\v b,\v{\h b}\in 2^2$.

\end{theorem}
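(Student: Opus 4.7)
The plan is to argue by contraposition: assume that $n_0\cdots n_r$ is \emph{not} \sectionhomo\ for any $f_{\vec b}$ with $\vec b\in 2^2$, and from this construct a $[0,2]$-variable word $v$ satisfying the equalizing condition. By possibly swapping colors (handling any pair $\vec b$, $\vec{\h b}$ where $f_{\vec b}(0\cdots 0)\ne f_{\vec{\h b}}(0\cdots 0)$ by an initial case split), one may reduce to the target of producing $v$ with $f_{\vec b}(v(\vec b))=0$ for every $\vec b\in 2^2$.

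First I would apply the \search\ argument (Lemma \ref{comblem1}) to each of the four colorings individually. Since $n_0\cdots n_r$ fails \sectionhomo\ for each $f_{\vec b}$, the lemma furnishes, for every $\vec b$, an abundant family of vectors $\vec a$ with mutually disjoint $1$-supports on which $f_{\vec b}(\vec a)=0$ and whose $1$-coordinates may be prescribed to sit inside any designated section $N_s$. These families supply substantial combinatorial freedom for each coloring separately; the real difficulty is coordinating them across the four values of $\vec b$.

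Next, the heart of the proof is to interleave the searching data across sufficiently many disjoint sub-blocks of $n_0\cdots n_r$. I would build $v$ section by section; at each stage I place either a constant symbol or an occurrence of $x_0$ or $x_1$ on the current section $N_s$. The invariant to maintain is that the current partial $v$ can still be completed so that, for every $\vec b$, the final full word $v(\vec b)$ lies in the searching family for $f_{\vec b}$; if this invariant survives to the end, we will automatically have $f_{\vec b}(v(\vec b))=0$ for all four $\vec b$.

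The main obstacle is that the four searching families a priori live on unrelated coordinate blocks for different $\vec b$, yet any single placement of $x_0$ or $x_1$ must simultaneously make progress in all four colorings at once. To overcome this, I would prove an auxiliary joint searching lemma asserting that, for sufficiently many sections, one can fix a common template of coordinates on which the four searching families are realized in parallel. Its proof iterates Lemma \ref{comblem1} four times: apply it first to $f_{00}$ on a long initial block of sections; then refine inside the resulting structured subspace for $f_{01}$; and similarly for $f_{10}$ and $f_{11}$. Each refinement multiplies the required length of $n_0\cdots n_r$, which is precisely where ``sufficiently long'' is quantified. Once such a template is in hand, the assembly of $v$ reduces to a final coherent pigeonhole choice at each section between a constant symbol and a variable symbol $x_0$ or $x_1$, producing the desired $[0,2]$-variable word.
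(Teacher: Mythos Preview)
Your proposal has a genuine gap at the ``joint searching lemma'' step. The searching argument (Lemma~\ref{comblem1}) only controls the value of a coloring on \emph{individual} vectors $\vec a_m$ with mutually disjoint supports; it says nothing about the value on unions $\cup_{m\in I}\vec a_m$. But for a $[0,2]$-variable word $v$, the four strings are related by
\[
v(00)=v(01)\cap v(10),\qquad v(11)=v(01)\cup v(10),
\]
so hitting the target $f_{11}(v(11))=0$ forces you to control $f_{11}$ on a union, not on a single searched vector. Lemma~\ref{comblem2} shows precisely that this control fails in general: from non-\sectionhomo\ one can always find disjoint $\vec a_0,\vec a_1$ with $f(\vec a_0)=f(\vec a_1)=0$ yet $f(\vec a_0\cup\vec a_1)=1$. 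Hence ``iterating Lemma~\ref{comblem1} four times'' cannot yield a common template on which all four evaluations land in $0$; the invariant you propose (``$v(\vec b)$ lies in the searching family for $f_{\vec b}$'') does not propagate through the union needed for $\vec b=11$. The section-by-section assembly and the final ``coherent pigeonhole choice'' inherit the same defect: a local choice of $x_0$ or $x_1$ commits you simultaneously on all four $v(\vec b)$'s in a coupled way that the searching families do not respect.

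The paper's argument avoids this obstruction by reversing the logic. Instead of directly building $v$, it assumes no such $v$ exists and uses that assumption to derive global constraints on the colorings: for each pair of opposite edges of the square $\{00,01,10,11\}$ and each color $j$, either a witness exists on an initial block (restricting the tail colorings) or not (restricting the initial colorings). Iterating this produces one of three explicit restriction patterns (Lemma~\ref{comblemred}), and each pattern is then refuted by a short ad hoc argument (Lemmas~\ref{comblem3}, \ref{comblem4}--\ref{comblem6}, \ref{comblem8}--\ref{comblem10}), in several cases by exploiting exactly the ``bad union'' phenomenon of Lemma~\ref{comblem2} to force a contradiction with non-\sectionhomo. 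The contrapositive use of the nonexistence of $v$ is the missing idea in your plan.
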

\begin{proof}[Proof of Theorem \ref{combth2} from Theorem \ref{combth3}]
Fix an $f:2^N\rightarrow 2$.
Let
$$f_{\v b}:2^{N-2}\ni \v a\mapsto
f(\v b \v a). $$
If $n_1\cdots n_r$ is \sectionhomo\ for $f_{\v b}$
for some $\v b$, then clearly we are done
(since it means $n_0\cdots n_r$ is \sectionhomo\ for $f$).
Otherwise, by Theorem \ref{combth3},
let $v$ be a $\02$-variable word of length $N-2$
such that  for some $j\in 2$,
$f_{\v b}(v(\v b)) = j$ for all $\v b\in 2^2$.
Let $\h v = x_0x_1 v$.
Clearly $f(\h v(\v b)) = f(\v b\cdot v(\v b)) = f_{\v b}(v(\v b)) = j$.
Thus $f$ does not witnesses $\opp_2^2(n_0\cdots n_r)$.
The conclusion follows since $f$ is arbitrary.
\end{proof}
Of course, Theorem \ref{combth2} implies Theorem \ref{combth3} in the same fashion.
The remaining of section \ref{combsec2} will prove Theorem \ref{combth3}.
For every $\v a\in 2^{<\omega}$, we think
of $\v a $ as a  finite set $\v a^{-1}(1)$
(and vice versa when the length of $\v a$ is clear),
therefore it makes sense to write $\v a_0\cup \v a_1, \v a_0\cap \v a_1,\v a_0\setminus\v a_1$
\footnote{We only use this sort of notation when $|\v a_0|=|\v a_1|$.}.

Assume otherwise that
$n_0\cdots n_r$ is not \sectionhomo\ for $f_{\v b}$ for all $\v b\in 2^2$
and
the $\02$-variable word does not exist.
The basic idea to prove Theorem \ref{combth3} is to reduce
it to some simpler
cases. i.e., to construct some sequence $\h n_0\cdots \h n_{\h r}$
and colorings $\h f_{00},\h f_{01},\h f_{10},\h f_{11}:2^{\h N}\rightarrow 2$
with $\h n_0\cdots \h n_{\h r}, (\h f_{\v b}:\v b\in 2^2)$ preserving the otherwise hypothesis
while acquiring some additional restriction on
$(\h f_{\v b}:\v b\in 2^2)$.
More specifically,
 we firstly try to find
a $\v a_0 $ with $\v a_0^{-1}(1)\subseteq
  \cup_{s\leq r/2} N_s$
such that
$$
f_{00}(\v a_0) = f_{01}(\v a_0) = 0.
$$

If  $\v a_0$ exists, then consider
the sequence $n_{r/2+1}\cdots n_r$ and
the colorings
$$
\h f_{\v b}: 2^{n_{r/2+1}+\cdots +n_r}
\ni \v a\mapsto f_{\v b}((\v a_0\uhr n)^{\smallfrown} \v a)
$$
where $n = n_0+\cdots +n_r$.
Note that
the otherwise hypothesis is preserved by
the sequence $n_{r/2+1}\cdots n_r$
and the colorings $(\h f_{\v b}:\v b\in 2^2)$.
i.e., for all $\v b\in 2^2$,
$n_{r/2+1}\cdots n_r$ is not \sectionhomo\ for $\h f_{\v b}$;
and there does not exist a $\02$-variable word of length $n_{r/2+1}+\cdots +n_r$
such that $\h f_{\v b}(v(\v b)) = \h f_{\v{\h b}}(v(\v{\h b}))$
for all $\v b,\v{\h b}\in 2^2$
\footnote{Also note that $n_{r/2+1}\cdots n_r$
is still sufficiently long in the sense of whatever needed in the following proof
since $n_0\cdots n_r$ is sufficiently long.}.
In addition,
 for every $\v a\in 2^{n_{r/2+1}+\cdots +n_r }$,

 \tikzcdset{every label/.append style = {font = \tiny}}
\begin{equation}\label{combeq24}
\neg (\h f_{10}(\v a) = \h f_{11}(\v a)=0 )
\footnote{
We use the diagram beside this formula to assist
the reader to remember the restriction.
The label ``0" on the edge $(f_{10},f_{11})$
refers to the fact that
$\neg (\h f_{10}(\v a) = \h f_{11}(\v a)=0 )$ for all $\v a\in 2^N$.}.
\hspace{1cm}
\begin{tikzcd}[column sep = 0.3em, row sep = 0.2em, inner xsep = 0.02em]
 & \text{\tiny$ f_{11} $}\arrow[" " description, dl, dash]\arrow[dr, dash, "0" description] &
\\
 \text{\tiny$ f_{01} $} &                        &  \text{\tiny$ f_{10} $}\\
&  \text{\tiny$ f_{00} $}\arrow[ul,dash, " "  description]\arrow[ur, dash," "  description]&
\end{tikzcd}
\end{equation}
To see this, suppose $\h f_{10}(\v a) = \h f_{11}(\v a) = 0$.
Consider the $1$-variable word $v$
of length $N$ such that $v(00) = v(01) = \v a_0$,
$v(10) = v(11) = (\v a_0\uhr n)^{\smallfrown} \v a$,
we have that
$f_{\v b}(v(\v b)) = 0$ for all $\v b\in 2^2$.
Thus, if $\v a_0 $ exists, we proceed the proof
with the sequence $n_{r/2+1}\cdots n_r$ (which will still be sufficiently long)
and the colorings $(\h f_{\v b}:\v b\in 2^2)$.
The good thing is that not only do we still have the otherwise hypothesis,
we also have the additional (\ref{combeq24}).

If  $\v a_0$ does not exist,
we proceed the proof with the sequence
$n_0\cdots n_{r/2}$
and  the
colorings $$
\h f_{\v b}:2^{n_0+\cdots+ n_{r/2}}\ni \v a\mapsto
f_{\v b}(\v a 0\cdots 0).
$$
Note that the otherwise hypothesis
is preserved. In addition, since  $\v a_0$ does not exist,
 for every $\v a\in 2^{ n }$,
$$
\neg \h f_{00}(\v a) = \h f_{01}(\v a)=0.
\hspace{1cm}
\begin{tikzcd}[column sep = 0.3em, row sep = 0.2em, inner xsep = 0.02em]
 & \text{\tiny$ f_{11} $}\arrow[" " description, dl, dash]\arrow[dr, dash, " " description] &
\\
 \text{\tiny$ f_{01} $} &                        &  \text{\tiny$ f_{10} $}\\
&  \text{\tiny$ f_{00} $}\arrow[ul,dash, "0 "  description]\arrow[ur, dash," "  description]&
\end{tikzcd}
$$

Whichever is the case, we preserve the
otherwise hypothesis and acquire an additional
restriction on the behavior of the  colorings.
Suppose it is the case of (\ref{combeq24}),
for simplicity, denote $n_{r+1}\cdots n_r, \h f_{\v b}$
as $n_0\cdots n_r, f_{\v b}$ respectively.
Apply the above argument again (with respect to the new sequence
and colorings) against
the    edges
 $(f_{00},f_{10})$, $(f_{01},f_{11})$ and color $0$
 (where the above argument is against
 $(f_{00},f_{01}), (f_{10},f_{11})$ and color $0$
 and results in some sequence and colorings so that
 the joint behavior of either $(f_{00},f_{01})$
 or $(f_{10},f_{11})$ is further restricted).
 We have that there exists a sequence
 $\h n_0\cdots \h n_{\h r}$
 and colorings $\h f_{00},\h f_{01},\h f_{10},\h f_{11}:
 2^{\h N}\rightarrow 2$
 preserving the otherwise hypothesis
 and
 (\ref{combeq24}). In addition
 \begin{eqnarray}
 \text{either }
 &\neg \h f_{00}(\v a)= \h f_{10}(\v a)=0\text{ for all }\v a\in 2^{\h N};
 \hspace{1cm}
\begin{tikzcd}[column sep = 0.3em, row sep = 0.2em, inner xsep = 0.02em]
 & \text{\tiny$ f_{11} $}\arrow[" " description, dl, dash]\arrow[dr, dash, " " description] &
\\
 \text{\tiny$ f_{01} $} &                        &  \text{\tiny$ f_{10} $}\\
&  \text{\tiny$ f_{00} $}\arrow[ul,dash, "0 "  description]\arrow[ur, dash,"0"  description]&
\end{tikzcd}
 \\ \nonumber
 \text{ or }&\neg \h f_{01}(\v a)= \h f_{11}(\v a)=0\text{ for all }\v a\in 2^{\h N};
  \hspace{1cm}
 \begin{tikzcd}[column sep = 0.3em, row sep = 0.2em, inner xsep = 0.02em]
 & \text{\tiny$ f_{11} $}\arrow["0" description, dl, dash]\arrow[dr, dash, " " description] &
\\
 \text{\tiny$ f_{01} $} &                        &  \text{\tiny$ f_{10} $}\\
&  \text{\tiny$ f_{00} $}\arrow[ul,dash, "0 "  description]\arrow[ur, dash," "  description]&
\end{tikzcd}
 \end{eqnarray}
Keeping applying this argument for each pair of
non adjunctive edges and each color, we have the following.

\begin{lemma}\label{comblemred}
Let $n_0\cdots n_r$ be sufficiently long and is
not \sectionhomo\ for the  colorings
$f_{00},f_{10},f_{01},f_{11}:2^N\rightarrow 2$.
Suppose  there does not exist a $\02$-variable word $v$
such that for every $\v b,\v{\h b}\in 2^2$,
$f_{\v b}(v(\v b)) = f_{\v{\h b}}(v(\v{\h b}))$.
Then there exists a sufficiently long sequence
$\h n_0\cdots \h n_{\h r}$ and colorings $\h f_{00},\h f_{10},\h f_{01},\h f_{11}:
2^{\h N}\rightarrow 2$ such that the hypothesis of this lemma is preserved.
In addition, one of the following is true:
\begin{itemize}
\item
for every $\v a\in 2^{\h N}$,
\begin{eqnarray}\label{combeq30}
&\neg \h f_{01}(\v a) = \h f_{11}(\v a) = 0, \neg \h f_{10}(\v a) = \h f_{11}(\v a) = 0.
  \hspace{1cm}
 \begin{tikzcd}[column sep = 0.3em, row sep = 0.2em, inner xsep = 0.02em]
 & \text{\tiny$ f_{11} $}\arrow["0" description, dl, dash]\arrow[dr, dash, "0" description] &
\\
 \text{\tiny$ f_{01} $} &                        &  \text{\tiny$ f_{10} $}\\
&  \text{\tiny$ f_{00} $}\arrow[ul,dash, " "  description]\arrow[ur, dash," "  description]&
\end{tikzcd}
\end{eqnarray}

\item for every $\v a\in 2^{\h N}$,
\begin{eqnarray}\label{combeq31}
&\neg \h f_{00}(\v a) = \h f_{01}(\v a) , \neg\h f_{01}(\v a) = \h f_{11}(\v a) .
  \hspace{1cm}
 \begin{tikzcd}[column sep = 0.3em, row sep = 0.2em, inner xsep = 0.02em]
 & \text{\tiny$ f_{11} $}\arrow["01" description, dl, dash]\arrow[dr, dash, " " description] &
\\
 \text{\tiny$ f_{01} $} &                        &  \text{\tiny$ f_{10} $}\\
&  \text{\tiny$ f_{00} $}\arrow[ul,dash, "01"  description]\arrow[ur, dash," "  description]&
\end{tikzcd}
\end{eqnarray}

\item for every $\v a\in 2^{\h N}$,
\begin{eqnarray}\label{combeq32}
&\neg \h f_{00}(\v a) = \h f_{01}(\v a)=0 , \neg \h f_{00}(\v a) = \h f_{10}(\v a) = 1,&
\\ \nonumber
&\neg \h f_{01}(\v a) =\h  f_{11}(\v a) = 0, \neg \h f_{10}(\v a) = \h f_{11}(\v a) = 1.&
  \hspace{1cm}
  \begin{tikzcd}[column sep = 0.3em, row sep = 0.2em, inner xsep = 0.02em]
 & \text{\tiny$ f_{11} $}\arrow["0" description, dl, dash]\arrow[dr, dash, "1" description] &
\\
 \text{\tiny$ f_{01} $} &                        &  \text{\tiny$ f_{10} $}\\
&  \text{\tiny$ f_{00} $}\arrow[ul,dash, "0"  description]\arrow[ur, dash,"1"  description]&
\end{tikzcd}
\end{eqnarray}
 \end{itemize}
\end{lemma}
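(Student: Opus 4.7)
The plan is to isolate from the discussion preceding the lemma a basic ``search-and-reduce'' sublemma, and then iterate it in an adaptive decision-tree analysis on the outcomes to force one of the configurations (\ref{combeq30}), (\ref{combeq31}), or (\ref{combeq32}).

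First I would formalize the basic reduction as a sublemma: given the lemma's hypothesis for a sequence $n_0\cdots n_r$ and colorings $(f_{\v b})_{\v b\in 2^2}$, together with any already-established finite set $R$ of restrictions of the form ``$\neg(f_{\v b_0}(\v a)=f_{\v b_1}(\v a)=j)$ for all $\v a$'' indexed by edges $(\v b_0,\v b_1)$ and colors $j$ of the Boolean square, and given any further edge $e=(f_{\v b_0},f_{\v b_1})$ and color $j$, there exist a sufficiently long subsequence and new colorings $(\h f_{\v b})$ preserving the lemma's hypothesis, preserving every restriction in $R$, and additionally imposing at least one new restriction---either on $e$ itself or on the edge opposite to $e$---for color $j$. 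The proof of this sublemma is the searching-and-reducing argument already sketched in the paper: search for $\v a_0\in 2^N$ supported in the first half with $f_{\v b_0}(\v a_0)=f_{\v b_1}(\v a_0)=j$; on success reduce to the second half, deriving the opposite-edge restriction via the combined-variable-word construction used to obtain (\ref{combeq24}); on failure reduce to the first half, reading off the current-edge restriction directly. Preservation of $R$ is immediate because the new colorings are substitution-restrictions of the old.

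With this sublemma in hand, I would carry out an adaptive decision tree of attacks. Begin by attacking edge $(f_{00},f_{01})$ with color $0$. In the branch where $(f_{10},f_{11},0)$ is produced, aim for (\ref{combeq30}) by next attacking $(f_{01},f_{11})$ with color $0$: a failing attack yields $(f_{01},f_{11},0)$, completing (\ref{combeq30}) together with $(f_{10},f_{11},0)$; a succeeding attack instead yields $(f_{00},f_{10},0)$, after which I would iterate attacks on $f_{11}$-incident edges of the new colorings until (\ref{combeq30}) is achieved, or else pivot to color-$1$ attacks to build (\ref{combeq32}). In the symmetric branch where $(f_{00},f_{01},0)$ is produced, aim at vertex $f_{01}$ for (\ref{combeq31}) by attacking $(f_{01},f_{11})$ with color $0$ and then both color-$1$ edges incident to $f_{01}$; a succeeding color-$1$ branch instead lands in (\ref{combeq32}). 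At every leaf of this tree the accumulated restriction set contains those of exactly one of the three target configurations.

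The main obstacle is ensuring the adaptive case analysis is exhaustive: an attack can be ``wasted'' when the opposite edge is already restricted for the attacked color, since a succeeding search then reproduces an existing restriction with no new information. I would handle this by observing that there are only eight edge-color pairs on the $2\times 2$ Boolean square, so any unlucky trajectory must either eventually produce a new restriction that completes one of (\ref{combeq30}), (\ref{combeq31}), (\ref{combeq32}), or accumulate enough ``wasted'' successful searches that the structure is pinned into one of these configurations by the adaptive choice of edge and color in each round. Since the sublemma shrinks the sequence only by a bounded factor per iteration and the decision tree has bounded depth, the ``sufficiently long'' hypothesis is preserved throughout, so no length bookkeeping is needed beyond passing the ``sufficiently long'' requirement recursively. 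The full verification is a mechanical case-by-case check that each of the bounded number of leaves matches one of the three listed configurations.
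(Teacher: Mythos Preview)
Your search-and-reduce sublemma is correct and is exactly the mechanism the paper isolates in the discussion preceding the lemma. The gap is in the adaptive decision tree: you have not shown that its leaves always contain one of (\ref{combeq30}), (\ref{combeq31}), (\ref{combeq32}), and in fact they need not.

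The problem is that your sublemma, applied to edge $e$ with color $j$, places the label $j$ on either $e$ or its opposite edge, with no control over which. Nothing in your tree rules out the trajectory in which every attack deposits its label on an $f_{00}$-incident edge and never on an $f_{11}$-incident edge. For instance, if after some reductions one has $f_{01}=f_{10}=1-f_{00}$ everywhere, then any search on an $f_{00}$-incident edge necessarily fails (labeling that edge), while a search on an $f_{11}$-incident edge for color $j$ asks for $\v a_0$ with $f_{11}(\v a_0)=j$ and $f_{00}(\v a_0)=1-j$; absent further constraints tying $f_{00}$ to $f_{11}$, this may succeed at every stage, again labeling the opposite $f_{00}$-incident edge. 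The accumulated restrictions are then ``$0$ and $1$ on both edges at $f_{00}$'', which contains none of the three target configurations. Your bounded-depth remark does not help: repeated wasted attacks merely shrink the sequence without producing new labels.

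The paper's route is both simpler and handles exactly this. It applies the reduction \emph{non-adaptively} four times, once for each pair of opposite edges and each color, so that each pair carries each color on one of its two edges; this yields one of $16$ label patterns. It then reduces every pattern to one of (\ref{combeq30}), (\ref{combeq31}), (\ref{combeq32}) via three explicit symmetries, each checked to preserve the lemma's hypotheses: the color swap $\t f_{\v b}=1-\h f_{\v b}$, the coordinate swap $\t f_{bb}=\h f_{bb},\ \t f_{b\cdot(1-b)}=\h f_{(1-b)\cdot b}$, and the complement map $\t f_{\v b}(\v a)=\h f_{\v{\overline b}}(\v{\overline a})$. In particular, the stuck pattern ``$0$ on both edges at $f_{00}$'' is sent to (\ref{combeq30}) by the complement map. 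These symmetry reductions are the missing ingredient; an adaptive tree alone cannot substitute for them.
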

\begin{proof}

The point is that for every pair of non adjunctive edges
say $(f_{00},f_{01}), (f_{10},f_{11})$, every color $j\in 2$,
$j$ appears on one of the  edges.

Clearly, some cases are reduced to one of the three cases due to symmetry.
For example, we may obtain
$\h n_0\cdots \h n_{\h r}$ and colorings $\h f_{00},\h f_{10},\h f_{01},\h f_{11}$
so that for every $\v a\in 2^{\h N}$,
\begin{eqnarray}\nonumber
&\neg \h f_{01}(\v a) = \h f_{11}(\v a) = 1, \neg \h f_{10}(\v a) = \h f_{11}(\v a) = 1.
 \hspace{1cm}
 \begin{tikzcd}[column sep = 0.3em, row sep = 0.2em, inner xsep = 0.02em]
 & \text{\tiny$ f_{11} $}\arrow[ "1" description, dl, dash]\arrow[dr, dash, "1" description] &
\\
 \text{\tiny$ f_{01} $} &                        &  \text{\tiny$ f_{10} $}\\
&  \text{\tiny$ f_{00} $}\arrow[ul,dash, " "  description]\arrow[ur, dash, " "  description]&
\end{tikzcd}
\end{eqnarray}
This clearly reduces to (\ref{combeq30}) by considering the colorings
$\t f_{\v b} = 1- \h f_{\v b}$.

Other two symmetry cases are less trivial but still easy.
One of the symmetry case is
for every $\v a\in 2^{\h N}$,
\begin{eqnarray}\nonumber
&\neg \h f_{00}(\v a) = \h f_{10}(\v a), \neg \h f_{10}(\v a) = \h f_{11}(\v a) .
  \hspace{1cm}
 \begin{tikzcd}[column sep = 0.3em, row sep = 0.2em, inner xsep = 0.02em]
 & \text{\tiny$ f_{11} $}\arrow[" " description, dl, dash]\arrow[dr, dash, "01" description] &
\\
 \text{\tiny$ f_{01} $} &                        &  \text{\tiny$ f_{10} $}\\
&  \text{\tiny$ f_{00} $}\arrow[ul,dash, " "  description]\arrow[ur, dash, "01"  description]&
\end{tikzcd}
\end{eqnarray}
This can be reduced to (\ref{combeq31})
by considering  $\t f_{bb} = \h f_{bb}, \t f_{b\cdot 1-b} = \h f_{1-b\cdot b}$.
Note that for every $\02$-variable word $v$,
if $\t f_{\v b}(v(\v b)) = j$ for some $j$ for all $\v b\in 2^2$,
then consider the variable word $\h v$ such that
$\h v(bb) = v(bb), \h v(b\cdot 1-b) = v(1-b\cdot b)$,
we have that $\h f_{\v b}(v(\v b)) = j$ for all $\v b\in 2^2$.

Another symmetry case is that, say
for every $\v a\in 2^{\h N}$,
\begin{eqnarray}\label{combeq33}
&\neg \h f_{00}(\v a) = \h f_{01}(\v a)=0, \neg \h f_{00}(\v a) = \h f_{10}(\v a)=0 .
 \hspace{1cm}
 \begin{tikzcd}[column sep = 0.3em, row sep = 0.2em, inner xsep = 0.02em]
 & \text{\tiny$ f_{11} $}\arrow["" description, dl, dash]\arrow[dr, dash, "" description] &
\\
 \text{\tiny$ f_{01} $} &                        &  \text{\tiny$ f_{10} $}\\
&  \text{\tiny$ f_{00} $}\arrow[ul,dash, "0"  description]\arrow[ur, dash,"0"  description]&
\end{tikzcd}
\end{eqnarray}
This can be reduced to (\ref{combeq30}).
Let $h(\v a) = \v{\overline{ a}}$ be such that $\v{\overline{ a}}(t) = 1- \v a(t)$
for all $t< |\v a|$. Consider $\t f_{\v b} = \h f_{\v{\overline{b}}}\circ h$.
Where $\v{\overline{ b}}(t) = 1-\v b(t)$.
Now (\ref{combeq33})
transforms into (\ref{combeq30}) with respect to $\t f_{\v b}$
and clearly $\h n_0\cdots \h n_{\h r}$ is not \sectionhomo\ for $\t f_{\v b}$ for all $\v b\in 2^2$.
On the other hand, suppose for some $\02$-variable word $v$, $\t f_{\v b}(v(\v b)) = j$
for all $\v b\in 2^2$.
Consider the variable word $\overline{v}$ such that
$\overline{v}(t) = 1-v(t)$ if $v(t)\in 2$
and $\overline{v}(t) = v(t)$ if $v(t) $ is a variable.
We have that
$$
\h f_{\v b}( \overline{v}(\v b)) =
\t f_{\v{\overline{b}}}(h^{-1}(\overline{v}(\v b)))
 = \t f_{\v{\overline{b}}}
 (v(\v{\overline{b}}))=j.
$$

\end{proof}

By Lemma \ref{comblemred},
to prove Theorem \ref{combth3}, it remains to derive a contradiction for
each of the three
cases.
We start with   case (\ref{combeq30})
(which might be the simplest case)
in section \ref{combsubsec0}
and the other cases in section \ref{combsubsec1},\ref{combsubsec2}
respectively.
Each case of (\ref{combeq31})(\ref{combeq32})  is further
reduced to some $\h n_0\cdots \h n_{\h r},
(\h f_{\v b}:\v b\in 2^2)$
preserving the case hypothesis while
imposing further restrictions on the behavior
of the colorings.
Using the diagram symbol,
the three cases proceed as following:
\begin{eqnarray}\nonumber
&\begin{tikzcd}[column sep = 0.3em, row sep = 0.2em, inner xsep = 0.02em]
 & \text{\tiny$ f_{11} $}\arrow["0" description, dl, dash]\arrow[dr, dash, "0" description] &
\\
 \text{\tiny$ f_{01} $} &                        &  \text{\tiny$ f_{10} $}\\
&  \text{\tiny$ f_{00} $}\arrow[ul,dash, " "  description]\arrow[ur, dash," "  description]&
\end{tikzcd}
&\overset{Lemma \ref{comblem3}}{\Rightarrow}
\text{contradiction};
\\ \nonumber
&\begin{tikzcd}[column sep = 0.3em, row sep = 0.2em, inner xsep = 0.02em]
 & \text{\tiny$ f_{11} $}\arrow["01" description, dl, dash]\arrow[dr, dash, " " description] &
\\
 \text{\tiny$ f_{01} $} &                        &  \text{\tiny$ f_{10} $}\\
&  \text{\tiny$ f_{00} $}\arrow[ul,dash, "01"  description]\arrow[ur, dash," "  description]&
\end{tikzcd}
&\overset{Lemma \ref{comblem4}}{\Rightarrow}
\begin{tikzcd}[column sep = 0.3em, row sep = 0.2em, inner xsep = 0.02em]
 & \text{\tiny$ f_{11} $}\arrow["01" description, dl, dash]\arrow[dr, dash, " " description] &
\\
 \text{\tiny$ f_{01} $} &      \text{\tiny $01$}                    &  \text{\tiny$ f_{10} $}\\
&  \text{\tiny$ f_{00} $}\arrow[ul,dash, "01"  description]\arrow[ur, dash," "  description]&
\end{tikzcd}
\overset{Lemma \ref{comblem6}}{\Rightarrow}
\text{contradiction};\\ \nonumber
&\begin{tikzcd}[column sep = 0.3em, row sep = 0.2em, inner xsep = 0.02em]
 & \text{\tiny$ f_{11} $}\arrow["0" description, dl, dash]\arrow[dr, dash, "1" description] &
\\
 \text{\tiny$ f_{01} $} &                        &  \text{\tiny$ f_{10} $}\\
&  \text{\tiny$ f_{00} $}\arrow[ul,dash, "0"  description]\arrow[ur, dash,"1"  description]&
\end{tikzcd}
&\overset{Lemma \ref{comblem8}}{\Rightarrow}
\begin{tikzcd}[column sep = 0.3em, row sep = 0.2em, inner xsep = 0.02em]
 & \text{\tiny$ f_{11} $}\arrow["0" description, dl, dash]\arrow[dr, dash, "1" description] &
\\
 \text{\tiny$ f_{01} $} &      \text{\tiny $01$}             &  \text{\tiny$ f_{10} $}\\
&  \text{\tiny$ f_{00} $}\arrow[ul,dash, "0"  description]\arrow[ur, dash,"1"  description]&
\end{tikzcd}
\overset{Lemma \ref{comblem10}}{\Rightarrow}
\text{contradiction}.
\end{eqnarray}

\subsubsection{Case (\ref{combeq30})}
\label{combsubsec0}
The following lemma serves as a warm up and its idea will
be further used.
\begin{lemma}\label{comblem2}
Let $n_0\cdots n_r$ be sufficiently long and is
not \sectionhomo\ for  coloring $f:2^N\rightarrow 2$.
Then there exist    $\v a_0,\v a_1\in 2^N$ with $\v a_0 \cap \v a_1 =\emptyset$
such that $  f(\v a_0)=f(\v a_1)=0, f(\v a_0\cup \v a_1) = 1$.

\end{lemma}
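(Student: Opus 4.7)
The plan is to argue by contradiction. Suppose that for every disjoint pair $\v a_0,\v a_1\in 2^N$ with $f(\v a_0)=f(\v a_1)=0$ one has $f(\v a_0\cup \v a_1)=0$. A straightforward induction on $k$ then shows that $f^{-1}(0)$ is closed under all finite disjoint unions: whenever $\v a_0,\ldots,\v a_{k-1}$ are mutually disjoint with each $f(\v a_i)=0$, we have $f(\bigcup_{i<k}\v a_i)=0$.

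Next I invoke the \search\ argument (Lemma \ref{comblem1}) to produce vectors $\v a_0,\ldots,\v a_{n_0-1}\in 2^N$ with pairwise disjoint supports, $f(\v a_m)=0$, and $m\in \v a_m^{-1}(1)$ for every $m<n_0$. From these I build an $n_0$-variable word $v$ of length $N$ by setting $v(t)=x_m$ if $t\in \v a_m^{-1}(1)$ (unambiguous by disjointness) and $v(t)=0$ otherwise.

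The remaining task is to verify that $v$ witnesses the \sectionhomo\ property at section $s=0$, contradicting the hypothesis. The first occurrence of $x_m$ in $v$ is $\min \v a_m^{-1}(1)\leq m<n_0$, so every first occurrence lies in $N_0$; by disjointness these $n_0$ minima are distinct, hence they fill $N_0$ exactly. For any $\v b\in 2^{n_0}$, unwinding the substitution gives $v(\v b)^{-1}(1)=\bigcup_{m:\v b(m)=1}\v a_m^{-1}(1)$, a disjoint union of elements of $f^{-1}(0)$; by the closure property $f(v(\v b))=0$, so $v$ is monochromatic for $f$. The main (if mild) obstacle is confirming that the set of first occurrences equals $N_0$ exactly rather than merely a subset of it; this is precisely where the placement condition $m\in \v a_m^{-1}(1)$ supplied by Lemma \ref{comblem1} combines with disjointness to force the count. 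Once this is in place the rest is routine combinatorial bookkeeping.
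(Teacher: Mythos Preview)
Your argument has a gap at the substitution $\v b = 0^{n_0}$. For that $\v b$ you get $v(\v b) = 0\cdots 0$, the empty union, and your closure property does not cover this case: the induction establishing ``$f^{-1}(0)$ is closed under finite disjoint unions'' only has a base case at $k=1$, not $k=0$. Nothing in the hypotheses forces $f(0\cdots 0)=0$, so your $v$ need not be monochromatic. Your handling of the first-occurrence set $N_0$ via disjointness and the pigeonhole count is fine, but the monochromaticity claim fails as written.

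The paper's proof follows the same overall plan but avoids this issue by producing $n_0+n_1$ (rather than $n_0$) mutually disjoint vectors $\v a_0,\dots,\v a_{n_0+n_1-1}$ with $\min\v a_m^{-1}(1)=m$ and $f(\v a_m)=0$, and then only claims $f(\bigcup_{m\in I}\v a_m)=0$ for \emph{nonempty} $I$. From these one builds a variable word in which some of the $\v a_m$ are permanently set to $1$ (so every substitution yields a nonempty union) while the remaining block supplies the variables with first occurrences filling the appropriate $N_s$. The easiest patch to your argument is the same idea: obtain one additional disjoint vector $\v a_*$ with $f(\v a_*)=0$ whose support lies outside $N_0$, set those coordinates of $v$ to $1$, and leave the rest of your construction unchanged; then every $v(\v b)$ contains $\v a_*$ and the closure property applies.
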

\begin{proof}
Suppose otherwise.
It suffices to show that there exist
$\v a_0,\cdots ,\v a_{n_0+n_1-1}$ with
$\v a_m^{-1}(1)$ being mutually disjoint
and $\min \v a^{-1}_m(1) = m$ for all $m<n_0+n_1$
such that
\begin{eqnarray}\label{combeq8}
\text{for every nonempty set $I\subseteq n_0+n_1$,
$f(\cup_{m\in I}\v a_m ) = 0$.
}
\end{eqnarray}
Because this implies $n_0\cdots n_r$ is \sectionhomo\ for $f$,
a contradiction.

We define $\v a_0,\cdots ,\v a_{n_0+n_1-1}$ by induction.
Let $\v a_0$ satisfy $0\in \v a_0^{-1}(1)\subseteq N_2\cup \{0\}$
(where $N_s = \{n_0+\cdots +n_{s-1},\cdots, n_0+\cdots +n_s-1\}$) and
$f(\v a_0) = 0$
(which must exist since $n_0\cdots n_r$ is not \sectionhomo\ for $f$ and by \search\ argument).
Note that by the otherwise hypothesis,
\begin{align}\label{combeq9}
\text{ for every $\v a\in 2^N$ with
$\v a \cap \v a_0  = \emptyset$ and with $f(\v a) = 0$,
we have $f(\v a\cup \v a_0) = 0$.
}
\end{align}
Let $\v a_1$ satisfy $1\in \v a_1^{-1}(1)\subseteq N_3\cup\{1\}$
and $f(\v a_1) = 0$; and let $\v a_2,\cdots,\v a_{n_0+n_1-1}$ be defined
in the same fashion.
We verify (\ref{combeq8}) by induction on $|I|$.
For  $I\subseteq n_0+n_1$ with $|I| = 1$ the conclusion clearly follows
by definition of $(\v a_m:m<n_0+n_1)$.
Suppose the conclusion follows when $|I|\leq n$ and now $|I| = n+1$
with $m\in I$.
Consider $\v a = \cup_{\h m\in I\setminus \{m\}}\v a_{\h m}$
and note that $\v a \cap \v a_m=\emptyset$.
By induction, $f(\v a) = 0$.
Therefore, by (\ref{combeq9}),
$f(\v a\cup \v a_m) = 0$. Thus  we are done.

\end{proof}

Note that for Lemma \ref{comblem2}, to be sufficiently long,
it suffices that $r-1\geq n_0+n_1$.

\begin{lemma}\label{comblem3}
Let $n_0\cdots n_r$ be sufficiently long and is
not \sectionhomo\ for  colorings
$f_{00},f_{10},f_{01},f_{11}:2^N\rightarrow 2$.
Suppose  there does not exist a $\02$-variable word $v$
such that for every $\v b,\v{\h b}\in 2^2$,
$f_{\v b}(v(\v b)) = f_{\v{\h b}}(v(\v{\h b}))$.
Suppose for every $\v a\in 2^N$,
\begin{eqnarray}\label{combeq10}
&\neg f_{01}(\v a) = f_{11}(\v a) = 0, \neg f_{10}(\v a) = f_{11}(\v a) = 0.
\hspace{1cm}\begin{tikzcd}[column sep = 0.3em, row sep = 0.2em, inner xsep = 0.02em]
 & \text{\tiny$ f_{11} $}\arrow["0" description, dl, dash]\arrow[dr, dash, "0" description] &
\\
 \text{\tiny$ f_{01} $} &                        &  \text{\tiny$ f_{10} $}\\
&  \text{\tiny$ f_{00} $}\arrow[ul,dash, " "  description]\arrow[ur, dash," "  description]&
\end{tikzcd}
\end{eqnarray}
Then there is a contradiction.

\end{lemma}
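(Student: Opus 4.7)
The plan is to use Lemma~\ref{comblem2} on $f_{11}$ and propagate the outcome through $f_{01},f_{10}$ via hypothesis (\ref{combeq10}), assembling a monochromatic $\02$-variable word that contradicts the standing hypothesis.  Since $f_{11}$ is not \sectionhomo\ on $n_0\cdots n_r$, Lemma~\ref{comblem2} (restricted to an initial portion of the sequence) supplies disjoint $\vec a_0,\vec a_1$ with $f_{11}(\vec a_0)=f_{11}(\vec a_1)=0$ and $f_{11}(\vec a_0\cup\vec a_1)=1$.  Hypothesis (\ref{combeq10}) then immediately forces $f_{01}(\vec a_i)=f_{10}(\vec a_i)=1$ for $i\in\{0,1\}$, so $f_{01},f_{10},f_{11}$ are already ``locked'' to value $1$ on the sets of interest.

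I would then consider the $\02$-variable word $v$ of length $N$ defined by $v=x_0$ on the coordinates of $\vec a_0$, $v=x_1$ on those of $\vec a_1$, $v=1$ on an auxiliary set $\vec p\subseteq N\setminus(\vec a_0\cup\vec a_1)$, and $v=0$ elsewhere; its four evaluations are $\vec p$, $\vec p\cup\vec a_1$, $\vec p\cup\vec a_0$, and $\vec p\cup\vec a_0\cup\vec a_1$.  The task is to choose $\vec p$ (supported on a portion of $n_0\cdots n_r$ reserved at the outset) so that the quadruple
\begin{equation*}
\bigl(f_{00}(\vec p),\;f_{01}(\vec p\cup\vec a_1),\;f_{10}(\vec p\cup\vec a_0),\;f_{11}(\vec p\cup\vec a_0\cup\vec a_1)\bigr)
\end{equation*}
is constant, since then $v$ is a monochromatic $\02$-variable word and we have our contradiction.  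The easy subcase $\vec p=\vec 0$ already pins three of the four coordinates to $1$; if additionally $f_{00}(\vec 0)=1$, the contradiction is immediate.

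The main obstacle is the residual case $f_{00}(\vec 0)=0$, which requires a genuine choice of nonempty $\vec p$.  My plan is to introduce auxiliary colorings $g_{\vec b}$ on the reserved portion---namely $g_{00}(\vec p)=f_{00}(\vec p)$, $g_{01}(\vec p)=f_{01}(\vec p\cup\vec a_1)$, and similarly for $g_{10},g_{11}$---and to iterate Lemma~\ref{comblem2} and the search argument (Lemma~\ref{comblem1}) on the components of $g$.  The key leverage is that (\ref{combeq10}) applied at the augmented sets forbids $g_{11}(\vec p)=0$ from coexisting with $g_{01}(\vec p)=0$ or $g_{10}(\vec p)=0$ at related arguments, since any $\vec p$ realizing a ``bad'' quadruple can be used, in combination with suitably enlarged $\vec a_0,\vec a_1$, to recycle the first two paragraphs and produce the contradicting variable word anyway.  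On a sufficiently long reserved portion this should prune all constant quadruples except $(1,1,1,1)$.  The reason case (\ref{combeq30}) is the simplest of the three is precisely that (\ref{combeq10}) constrains both $f_{01}$ and $f_{10}$ simultaneously whenever $f_{11}=0$; in cases (\ref{combeq31}) and (\ref{combeq32}) only one of $f_{01},f_{10}$ is pinned down at a time, which is why the additional reduction Lemmas~\ref{comblem4}--\ref{comblem10} are needed there.
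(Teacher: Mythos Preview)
Your first two paragraphs are essentially the paper's argument, and the easy subcase $f_{00}(\vec 0)=1$ is exactly how the paper finishes. The divergence is in how you handle $f_{00}(\vec 0)=0$.

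The paper does not attempt to find a clever $\vec p$ after $\vec a_0,\vec a_1$ have been fixed. Instead it disposes of the case $f_{00}(\vec 0)=0$ \emph{before} invoking Lemma~\ref{comblem2}: since $n_0\cdots n_r$ is not \sectionhomo\ for $f_{00}$, the \search\ argument gives some $\vec a$ with $\vec a^{-1}(1)\subseteq N_0$ and $f_{00}(\vec a)=1$; one then replaces each $f_{\vec b}$ by $\tilde f_{\vec b}(\vec a')=f_{\vec b}((\vec a\uhr n_0)^\smallfrown\vec a')$ on the tail $n_1\cdots n_r$. All hypotheses are preserved and now $\tilde f_{00}(\vec 0)=1$, so your easy subcase applies directly. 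This single translation step replaces your entire third paragraph.

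Your proposed handling of the residual case has a genuine gap. You claim that (\ref{combeq10}) ``forbids $g_{11}(\vec p)=0$ from coexisting with $g_{01}(\vec p)=0$ or $g_{10}(\vec p)=0$'', but (\ref{combeq10}) constrains $f_{01},f_{11}$ (resp.\ $f_{10},f_{11}$) only at the \emph{same} argument, whereas $g_{11}(\vec p)=f_{11}(\vec p\cup\vec a_0\cup\vec a_1)$ and $g_{01}(\vec p)=f_{01}(\vec p\cup\vec a_1)$ are evaluated at different points. So no constraint between the $g_{\vec b}$ follows, and the ``pruning'' you describe is not justified. The subsequent appeal to ``suitably enlarged $\vec a_0,\vec a_1$'' to recycle the earlier paragraphs is too vague to repair this: once $\vec a_0,\vec a_1$ are enlarged you lose control of $f_{11}(\vec a_0\cup\vec a_1)$, which was the whole point of Lemma~\ref{comblem2}. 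Replace the third paragraph by the one-line WLOG reduction above and the proof is complete.
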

\begin{proof}

For the sake of a contradiction,
it suffices to show that there exists a
$\02$-variable word $v$ such that
\begin{eqnarray}
\label{combeq25}f_{00}(v(00)) = f_{11}(v(11)) = 1,
f_{11}(v(01)) = f_{11}(v(10)) = 0
\end{eqnarray}
since by (\ref{combeq10}), this implies that
$f_{\v b}(v(\v b)) = 1$ for all $\v b\in 2^2$.
Without loss of generality, assume that
$  f_{00}(0\cdots 0) = 1$
otherwise let
(by \search\ argument) $\v a\in 2^N$ satisfy $\v a^{-1}(1)\subseteq N_0$,
$\h f_{00}(\v a)=1$ and proceed the proof with
the sequence $n_1n_2\cdots n_r$ and the four colorings
$
\t f_{\v b}:2^{N-n_0}\ni \v{\h a}\mapsto
  f_{\v b}((\v a\uhr n_0)^{\smallfrown}\v{\h a}).
$
Since $n_0\cdots n_r$ is sufficiently long,
by Lemma \ref{comblem2}, let $\v a_0,\v a_1\in 2^{N }$
be such that
 $\v a_0 \cap \v a_1=\emptyset$ and
\begin{eqnarray}\label{combeq22}
 f_{11}(\v a_0) =     f_{11}(\v a_1) = 0,
   f_{11}(\v a_0\cup \v a_1) = 1.
   \end{eqnarray}
  Clearly there is a $\02$-variable word $v$
such that
\begin{eqnarray}\label{combeq36}
v(00) = 0\cdots 0,\ v(01) =  \v a_0
, v(10) =  \v a_1
, v(11) =   \v a_0\cup \v a_1 .
\end{eqnarray}
Thus (\ref{combeq22})(\ref{combeq36})
and ``$f_{00}(0\cdots 0)=1$"
together  verify
(\ref{combeq25}). So we are done.

\end{proof}
Note that for
Lemma \ref{comblem3},
to be sufficiently long,
it suffices that $r-2\geq n_1+n_2$.

\subsubsection{Case (\ref{combeq31})}
\label{combsubsec1}
In the following lemma,
we further reduce
case (\ref{combeq31})
to some $\h n_0\cdots \h n_{\h r}$ and colorings $\h f_{00},\h f_{10},\h f_{01},\h f_{11}:
2^{\h N}\rightarrow 2$
so that some additional restriction
is imposed.
\begin{lemma}\label{comblem4}
Let $n_0\cdots n_r$ be sufficiently long and is
not \sectionhomo\ for  colorings
$f_{00},f_{10},f_{01},f_{11}:2^N\rightarrow 2$.
Suppose there does not exist a $\02$-variable word
$v$ such that $f_{\v b}(v(\v b)) = f_{\v{\h b}}(v(\v{\h b}))$
for all $\v b,\v{\h b}\in 2^2$.
Suppose for every $\v a\in 2^N$,
\begin{eqnarray}\label{combeq13}
&\neg f_{00}(\v a) = f_{01}(\v a) , \neg f_{01}(\v a) = f_{11}(\v a) .
\hspace{1cm}
\begin{tikzcd}[column sep = 0.3em, row sep = 0.2em, inner xsep = 0.02em]
 & \text{\tiny$ f_{11} $}\arrow["01" description, dl, dash]\arrow[dr, dash, " " description] &
\\
 \text{\tiny$ f_{01} $} &                        &  \text{\tiny$ f_{10} $}\\
&  \text{\tiny$ f_{00} $}\arrow[ul,dash, "01"  description]\arrow[ur, dash," "  description]&
\end{tikzcd}
\end{eqnarray}
Then there exists a sufficiently long sequence
$\h n_0\cdots \h n_{\h r}$ and colorings $\h f_{00},\h f_{10},\h f_{01},\h f_{11}:
2^{\h N}\rightarrow 2$ such that the hypothesis of this lemma is preserved.
\text{In addition, }
\begin{eqnarray}\nonumber
&\neg \h f_{10}(\v a) = \h f_{01}(\v a)
\text{ for all $\v a\in 2^{\h N}$}.
\hspace{1cm}
\begin{tikzcd}[column sep = 0.3em, row sep = 0.2em, inner xsep = 0.02em]
 & \text{\tiny$ f_{11} $}\arrow["01" description, dl, dash]\arrow[dr, dash, " " description] &
\\
 \text{\tiny$ f_{01} $} &      \text{\tiny$ 01 $}                  &  \text{\tiny$ f_{10} $}\\
&  \text{\tiny$ f_{00} $}\arrow[ul,dash, "01"  description]\arrow[ur, dash," "  description]&
\end{tikzcd}
\end{eqnarray}

\end{lemma}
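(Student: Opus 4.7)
The plan is to follow the dichotomy template established in the construction preceding Lemma \ref{comblemred}. Split $n_0\cdots n_r$ at its midpoint: let $n = n_0 + \cdots + n_{r/2}$ be the length of the prefix (positions $[0,n)$), and keep the suffix $n_{r/2+1}\cdots n_r$ (positions $[n,N)$). Then ask whether there exists $\v a_0 \in 2^N$ with $\v a_0^{-1}(1) \subseteq [0,n)$ such that $f_{10}(\v a_0) = f_{01}(\v a_0)$.

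\textbf{Case A (no such $\v a_0$).} Take $\h n_0\cdots \h n_{\h r} := n_0\cdots n_{r/2}$ and $\h f_{\v b}(\v a) := f_{\v b}(\v a\, 0\cdots 0)$ for $\v a\in 2^n$. The case hypothesis (\ref{combeq31}) passes immediately; the non-\sectionhomo\ property and the no-common-$[0,2]$-variable-word property pass by extending any hypothetical witness with trailing zeros. The case assumption itself furnishes $\h f_{10}(\v a)\ne \h f_{01}(\v a)$ for every $\v a\in 2^n$, so the lemma is verified.

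\textbf{Case B ($\v a_0$ exists with $f_{10}(\v a_0) = f_{01}(\v a_0) = j$).} Hypothesis (\ref{combeq31}) forces $f_{00}(\v a_0) = f_{11}(\v a_0) = 1 - j$. Take $\h n_0\cdots \h n_{\h r} := n_{r/2+1}\cdots n_r$ and $\h f_{\v b}(\v a) := f_{\v b}((\v a_0\uhr n)^{\smallfrown}\v a)$ for $\v a\in 2^{N-n}$. The case hypothesis is inherited, and the non-\sectionhomo\ and no-common-variable-word properties pass by prepending $\v a_0\uhr n$ to any witness. To establish $\h f_{10}(\v a')\ne \h f_{01}(\v a')$ for every $\v a'$, argue by contradiction. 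Given $\v a'$ with $\h f_{10}(\v a') = \h f_{01}(\v a') = j'$, set $\v a_1 := (\v a_0\uhr n)^{\smallfrown}\v a'$; it suffices to produce a $[0,2]$-variable word $v$ of length $N$, parametrised by disjoint $B_v,X_0,X_1\subseteq [0,N)$ via $v(\v b) = B_v\cup(\v b(0)\cdot X_0)\cup(\v b(1)\cdot X_1)$, with $f_{\v b}(v(\v b))$ constant across $\v b\in 2^2$, contradicting the no-common-variable-word hypothesis.

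\textbf{Main obstacle.} The crux is assembling $v$ in Case B. A direct attempt using only $\v a_0$ and $\v a_1$ (whose $f_{\v b}$-values on all four indices are determined by $j,j'$) fails on every assignment of $B_v,X_0,X_1$ drawn from $\{\emptyset,\v a_0,\v a_1\setminus \v a_0,\v a_1\}$: at most two of the four required equalities $f_{\v b}(v(\v b)) = c$ can be enforced from the available data, and the remaining two collapse to $j = 1-j$ or $j' = 1-j'$. The construction must therefore invoke the \search\ argument (Lemma \ref{comblem1}) on a portion of the prefix to harvest a collection of disjoint auxiliary points with prescribed first-$1$ positions and controlled $f_{10},f_{01}$ values, and then combine them combinatorially with $\v a_0,\v a_1$ to realise $v$. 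A case split on $(j,j')\in\{0,1\}^2$ appears unavoidable and drives the precise ``sufficiently long'' bound required by the lemma.
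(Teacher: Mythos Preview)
Your Case A is fine, but Case B is genuinely unfinished, and the plan you sketch there does not close the gap. Having a single $\v a_0$ in the prefix with $f_{10}(\v a_0)=f_{01}(\v a_0)=j$ and a single $\v a'$ in the suffix with $\h f_{10}(\v a')=\h f_{01}(\v a')=j'$ gives you two nested points $\v a_0\subseteq \v a_1$ on which all four $f_{\v b}$-values are pinned down by (\ref{combeq13}); as you observed, no $[0,2]$-variable word built from $\{\emptyset,\v a_0,\v a_1,\v a_1\setminus\v a_0\}$ yields a common value, and harvesting further auxiliary points by \search\ on the prefix does not obviously help either, because you still have no leverage on $f_{00}$ or $f_{11}$ at the new points beyond what (\ref{combeq13}) already forces. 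The difficulty is structural: you are trying to contradict the no-common-variable-word hypothesis directly, but the data you have collected only constrain $f_{01},f_{10}$ along the diagonal, and that is not enough.

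The paper sidesteps this by two changes of target. First, it reduces to one colour: it is enough to produce $\h n_0\cdots\h n_{\h r}$ and $(\h f_{\v b})$ preserving the hypothesis with $\neg\,\h f_{10}(\v a)=\h f_{01}(\v a)=0$ for all $\v a$; applying this twice (once for colour $0$, once for colour $1$) gives the full conclusion. Second, in the ``otherwise'' branch it does not do a single prefix/suffix split but takes $n_0+n_1$ many mutually disjoint subsequences $J_0,\dots,J_{n_0+n_1-1}$; the failure of the one-colour reduction on each $J_m$ yields (by \search) a point $\v a_m$ with $\min\v a_m^{-1}(1)=m$ and $f_{01}(\v a_m)=f_{10}(\v a_m)=0$. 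The payoff is then a contradiction against the \emph{non-\sectionhomo} hypothesis for $f_{11}$ rather than the no-common-variable-word hypothesis: one shows inductively that $f_{11}\bigl(\bigcup_{m\in I}\v a_m\bigr)=1$ for every nonempty $I\subseteq n_0+n_1$ (the inductive step uses $f_{00}(0\cdots0)=0$, $f_{10}(\v a_m)=0$, $f_{01}(\v a)=0$ from (\ref{combeq13}), and the no-common-variable-word hypothesis to rule out $f_{11}(\v a\cup\v a_m)=0$). So the missing ideas are: work one colour at a time, split into many pieces rather than two, and aim the contradiction at \sectionhomo\ for $f_{11}$.
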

\begin{proof}
It suffices to show that
there exists a sufficiently long sequence
$\h n_0\cdots \h n_{\h r}$ and colorings $\h f_{00},\h f_{10},\h f_{01},\h f_{11}:
2^{\h N}\rightarrow 2$
such that the hypothesis of this lemma is preserved and
in addition,
 for every $\v a\in 2^{\h N}$,
\begin{eqnarray}\label{combeq12}
&\neg \h f_{10}(\v a) = \h f_{01}(\v a)=0.
\end{eqnarray}
The conclusion follows by applying the proof of (\ref{combeq12}) twice.
Without loss of generality, assume that
$f_{00}(0\cdots 0) = 0$
otherwise let $\v a\in 2^N$ satisfy $\v a^{-1}(1)\subseteq N_0$,
$f_{00}(\v a)=0$ and proceed the proof with the four colorings
$
\t f_{\v b}:2^{N-n_0}\ni \v{\h a}\mapsto
f_{\v b}((\v a\uhr n_0)^{\smallfrown}\v{\h a})
$
(note that the hypothesis of this lemma holds for $(\t f_{\v b}: \v b\in 2^2)$).
Assume   otherwise that
\begin{eqnarray}
\label{combeq21}\text{the sequence and the four colorings
as (\ref{combeq12}) do not exist.}
\end{eqnarray}
\begin{claim}\label{combclaim2}
There exist $\v a_0,\cdots, \v a_{n_0+n_1-1}\in 2^N$
with $  \v a_m^{-1}(1)$ being mutually disjoint such that
for every $m<n_0+n_1$, $\min \v a_m^{-1}(1) = m$ and
 $
f_{01}(\v a_m) = f_{10}(\v a_m) = 0.
 $
\end{claim}
\begin{proof}
This is done  by \search\ argument.
Let $\{(n_s: s\in J_m)\}_{m<n_0+n_1}$
be $n_0+n_1$ many mutually disjoint subsequences of $n_0\cdots n_r$
(meaning $J_m$ are mutually disjoint) with $0,1\notin J_m$
such that each is sufficiently long.
For each $m<n_0+n_1$, there must be a $\v a_m$ with $m\in  
\v a_m^{-1}(1)\subseteq (\cup_{s\in J_m}N_s)\cup \{m\} $
such that $f_{01}(\v a_m) = f_{10}(\v a_m) = 0$.
Otherwise, suppose  say  $\v a_0$ does not exist.
For convenience, suppose $J_0 = \{2,3,\cdots, \t r\}$.
Then consider the sequence $ (n_s: s\in J_0) $
and  the colorings
$$
\h f_{\v b}: 2^{n_2+\cdots+ n_{\t r}}\ni \v{\h a}\mapsto f_{\v b}(10_{n_0+n_1-1}\v{\h a} 0\cdots 0).
$$
We have that $(n_s:s\in J_0)$
and $(\h f_{\v b}: \v b\in 2^2)$
 are as desired in (\ref{combeq12}), a contradiction to (\ref{combeq21}).

\end{proof}
It suffices to show that
\begin{claim}\label{combclaim0}
\text{For every nonempty $I\subseteq n_0+n_1$,
$f_{11}(\cup_{m\in I}\v a_m) = 1$}.
\end{claim}
\noindent Because this implies that $n_0\cdots n_r$ is \sectionhomo\ for $f_{11}$,
a contradiction.
\begin{proof}
We prove Claim \ref{combclaim0} similarly as in the proof of Lemma \ref{comblem2}.
When $|I| = 1$, the conclusion clearly follows since
$f_{11}(\v a_m)\ne f_{01}(\v a_m) = 0$ (by (\ref{combeq13})).
The point is,
\begin{align}\label{combeq16}
&\text{for every $m<n_0+n_1$, every $\v a\in 2^N$ with
$\v a\cap \v a_m=\emptyset$, }\\ \nonumber
&
\text{and with $f_{11}(\v a) = 1$,}
\text{we have $f_{11}(\v a\cup \v a_m) = 1$}.
\end{align}
To see this, note that $f_{11}(\v a) = 1$
implies $f_{01}(\v a) = 0$.
By definition of $\v a_m$, $f_{10}(\v a_m) = 0$
and recall that $f_{00}(0\cdots 0) = 0$.
Therefore, if on the other hand, $f_{11}(\v a\cup \v a_m) = 0$,
then let $v$ be the $\02$-variable word such that
$$v(00) = 0\cdots 0, v(01) = \v a, v(10) = \v a_m, v(11) = \v a\cup \v a_m,$$
we have $f_{\v b}(v(\v b)) = 0$ for all $\v b\in 2^2$, a contradiction
to the hypothesis of this lemma.

Suppose the claim holds when $|I|\leq n$ and now $|I| = n+1$
with $m\in I$.
Consider $\v a = \cup_{\h m\in I\setminus \{m\}}\v a_{\h m}$
and note that $\v a \cap \v a_m =\emptyset$.
By induction, $f_{11}(\v a) = 1$.
Therefore, by (\ref{combeq16}),
$f_{11}(\v a\cup \v a_m) = 1$. Thus  we are done.
\end{proof}

\end{proof}

In Lemma \ref{comblem4}, to be sufficiently long, it suffices
that there are $n_1+n_2$ many mutually disjoint subsequences
$\{(n_s:s\in J_{m})\}_{m<n_1+n_2}$ (meaning
$J_m$ are mutually disjoint) of $n_0\cdots n_r$ with $0,1,2\notin J_m$
such that each subsequence, say $(n_s:s\in J_0) = \h n_0\cdots \h n_{\h r}$,
 is sufficiently long in the sense
that there are $\h n_0+\h n_1$ many mutually disjoint subsequences of $\h n_0\cdots \h n_{\h r}$
such that each is sufficiently long in the sense
of Lemma \ref{comblem6}
\footnote{Don't forget that the first section $n_0$ is used
in the ``without loss of generality assumption".}.
Before Lemma \ref{comblem6}, we need a lemma similar to Lemma \ref{comblem2}.
\begin{lemma}\label{comblem7}
Let $n_0\cdots n_r$ be sufficiently long and is
not \sectionhomo\ for  coloring $f:2^N\rightarrow 2$.
Then there exist    $\v a_0,\v a_1\in 2^N$ with $\v a_0\cap \v a_1=\emptyset$
such that $  \neg f(\v a_0)= f(\v a_1)$ and $ f(\v a_0\cup \v a_1) = 1$.

\end{lemma}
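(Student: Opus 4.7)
Plan: I argue by contradiction, paralleling the strategy of Lemma~\ref{comblem2}. Assume the conclusion fails, so for every pair of disjoint $\v a_0,\v a_1\in 2^N$, either $f(\v a_0)=f(\v a_1)$ or $f(\v a_0\cup \v a_1)=0$. The plan is to use this otherwise hypothesis to manufacture a variable word witnessing that $n_0\cdots n_r$ is \sectionhomo\ for $f$, contradicting the hypothesis.

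Two preliminary observations. First, applying the otherwise hypothesis with $\v a_1=0\cdots 0$ (empty support, disjoint from any $\v a_0$) and using $\v a_0\cup 0\cdots 0=\v a_0$, we obtain $f(\v a_0)=1\Rightarrow f(\v a_0)=f(0\cdots 0)$; hence $f(0\cdots 0)=1$ unless $f\equiv 0$, in which case $n_0\cdots n_r$ is trivially \sectionhomo\ and we are done. Second, the otherwise hypothesis is equivalent to a \emph{splitting rule}: whenever disjoint $\v a_0,\v a_1$ satisfy $\v a_0\cup \v a_1\in A$ (where $A=f^{-1}(1)$), we have $f(\v a_0)=f(\v a_1)$.

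The main construction mirrors the atom/search argument of Lemma~\ref{comblem2}. Partition $n_0\cdots n_r$ into sufficiently many pairwise disjoint subsequences, and on the $m$-th subsequence (for $m<n_0$) apply Lemma~\ref{comblem2}, which still holds since $f$ is not \sectionhomo, to obtain disjoint $\v b_0^{(m)},\v b_1^{(m)}\in B$ supported on that subsequence with $\v a_m:=\v b_0^{(m)}\cup \v b_1^{(m)}\in A$; a routine search tweak (as in Lemma~\ref{comblem2}'s proof) ensures $\min \v a_m^{-1}(1)=m$. I then plan to show by induction on $|I|$ that $\bigcup_{m\in I}\v a_m\in A$ for every nonempty $I\subseteq n_0$. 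Once this is established, the $n_0$-variable word $v$ defined by $v(t)=x_m$ for $t\in \v a_m^{-1}(1)$ and $v(t)=0$ otherwise is a well-defined $n_0$-variable word of length $N$ with first-occurrence set exactly $N_0$, and $v(\v b)=\bigcup_{m\in \v b^{-1}(1)}\v a_m$; so $f(v(\v b))=1$ for every $\v b\in 2^{n_0}$ (including $\v b=0\cdots 0$, via $f(0\cdots 0)=1$). This exhibits \sectionhomo ness of $n_0\cdots n_r$ for $f$, the desired contradiction.

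The main obstacle is the inductive step: the otherwise hypothesis does not directly close $A$ under disjoint unions of two $A$-elements such as our $\v a_m$'s. I expect to exploit the fact that each $\v a_m$ itself splits as a disjoint union of two $B$-elements, combining this with the splitting rule (applied to hypothetical $A$-unions) and re-applying Lemma~\ref{comblem2} on still-untouched subsequences to force the inductive union back into $A$. Careful bookkeeping on which subsequences remain fresh at each stage of the induction is where most of the technical work will sit.
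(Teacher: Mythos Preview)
Your reduction to the ``splitting rule'' (if $\v a_0\cup\v a_1\in A:=f^{-1}(1)$ with $\v a_0\cap\v a_1=\emptyset$, then $f(\v a_0)=f(\v a_1)$) is correct, as is your observation that $f(0\cdots 0)=1$ unless $f\equiv 0$. The gap is precisely the one you flag: the splitting rule runs \emph{downward}, from a union in $A$ to information about its parts, and there is no way to make it run upward. Concretely, the contrapositive says that whenever disjoint $\v c\in B:=f^{-1}(0)$ and $\v d\in A$ are combined, $\v c\cup\v d\in B$. So if your base step succeeds but some two-element union $\v a_i\cup\v a_j$ falls into $B$, then every larger union $\bigcup_{m\in I}\v a_m$ with $I\supseteq\{i,j\}$ is forced into $B$ as well; your variable word would then take value $1$ on $|\v b^{-1}(1)|\le 1$ and $0$ on $|\v b^{-1}(1)|\ge 2$, which is not monochromatic. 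Neither the internal decomposition $\v a_m=\v b_0^{(m)}\cup\v b_1^{(m)}$ nor fresh applications of Lemma~\ref{comblem2} rescue this: adjoining any fresh $A$-element to a $B$-element just produces another $B$-element by the same absorption, and adjoining fresh $B$-elements yields no information at all.

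The paper's argument avoids this by working from the top down. By a \search\ argument it first finds a single $\v a\in A$ whose support contains the first several blocks $\cup_{s\le \h r}N_s$; then, again by \search, it finds mutually disjoint $\v a_0,\ldots,\v a_{n_0-1}\in A$ with $\v a_m^{-1}(1)\subseteq \v a^{-1}(1)$ and $\min\v a_m^{-1}(1)=m$. The induction is on \emph{removal}: one shows $f\big(\v a\setminus\bigcup_{m\in I}\v a_m\big)=1$ for every $I\subseteq n_0$. For the step, write $\v{\h a}=\v a\setminus\bigcup_{\h m\in I\setminus\{m\}}\v a_{\h m}\in A$ (by induction) as the disjoint union of $\v a_m$ and $\v{\h a}\setminus\v a_m$; since $\v{\h a}\in A$ and $f(\v a_m)=1$, the splitting rule forces $f(\v{\h a}\setminus\v a_m)=1$. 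This uses the rule in its natural direction and closes the argument cleanly. If you want to repair your approach, the fix is exactly this reversal: search first for a large $A$-element and carve the $\v a_m$'s out of it, rather than trying to glue small $A$-elements together.
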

\begin{proof}
Assume otherwise.
By \search\ argument, let $\v a$ be such that $\v a^{-1}(1)\supseteq \cup_{s\leq \h r} N_s$
(where $n_0\cdots n_{\h r}$ is a sufficiently long subsequence of $n_0\cdots n_r$,
say $\h r\geq n_0$)
and $f(\v a) = 1$.
By \search\ argument, let $\v a_0\cdots \v a_{n_0-1}$ be such that
$\v a_m^{-1}(1)$ are mutually disjoint; and for every $m<n_0$,
$\min \v a_m^{-1}(1)=m, \v a_m^{-1}(1)\subseteq \v a^{-1}(1)$,
$f(\v a_m) = 1$
.
It suffices to show that for every $I\subseteq n_0$,
$$
f(\v a\setminus (\cup_{m\in I}\v a_m)) = 1
$$
since  this implies that $n_0\cdots n_r$ is \sectionhomo\ for $f$, a contradiction.
Since $f(\v a) = 1$, the conclusion follows when $|I| = 0$.
Suppose it follows when $|I|\leq n$ and now $|I| = n+1$
with $m\in I$.
Consider $$\v{\h a} = \v a\setminus (\cup_{\h m\in I\setminus \{m\}}\v a_{\h m}) ,
 \v{\h a}_0 = \v a_m,
\v{\h a}_1 = \v{\h a}\setminus \v{\h a}_0.$$
Note that $\v{\h a}_0\cap \v{\h a}_1=\emptyset$;
and $f(\v{\h a}_0\cup \v{\h a}_1) = f(\v{\h a}) = 1$
(by induction) and $f(\v{\h a}_0) = 1$ (by definition of $\v a_m$).
Thus by the otherwise assumption,
$f(\v{\h a}_1) = f(\v a\setminus (\cup_{m\in I}\v a_m)) = 1$
and we are done.

\end{proof}
In Lemma \ref{comblem7}, to be sufficiently long, it suffices that
$r\geq n_0+1$.
\begin{lemma}\label{comblem6}
Let $n_0\cdots n_r$ be sufficiently long and is
not \sectionhomo\ for  colorings
$f_{00},f_{10},f_{01},f_{11}:2^N\rightarrow 2$.
Suppose there does not exist a $\02$-variable word
$v$ such that $f_{\v b}(v(\v b)) = f_{\v{\h b}}(v(\v{\h b}))$
for all $\v b,\v{\h b}\in 2^2$.
Suppose for every $\v a\in 2^N$,
\begin{eqnarray}\label{combeq35}
&\neg f_{00}(\v a) = f_{01}(\v a) ,& \neg f_{01}(\v a) = f_{11}(\v a),\\ \nonumber
&\neg f_{01}(\v a) = f_{10}(\v a).&
\hspace{1cm}
\begin{tikzcd}[column sep = 0.3em, row sep = 0.2em, inner xsep = 0.02em]
 & \text{\tiny$ f_{11} $}\arrow["01" description, dl, dash]\arrow[dr, dash, " " description] &
\\
 \text{\tiny$ f_{01} $} &      \text{\tiny$ 01 $}                  &  \text{\tiny$ f_{10} $}\\
&  \text{\tiny$ f_{00} $}\arrow[ul,dash, "01"  description]\arrow[ur, dash," "  description]&
\end{tikzcd}
\end{eqnarray}
Then there is a contradiction.

\end{lemma}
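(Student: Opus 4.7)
The plan is to exploit the pointwise constraints in (\ref{combeq35}) to collapse the four colorings into a single coloring, and then build the forbidden $[0,2]$-variable word directly from Lemma \ref{comblem7}. The three constraints state that $f_{01}(\v a)$ differs from each of $f_{00}(\v a), f_{10}(\v a), f_{11}(\v a)$ at every $\v a\in 2^N$; since we have only two colors, this forces $f_{00}=f_{10}=f_{11}$ pointwise and $f_{01}=1-f_{00}$. Setting $f:=f_{00}$, which inherits from the lemma's hypothesis the property of not being \sectionhomo\ for $n_0\cdots n_r$, the desired contradiction will be obtained as soon as we exhibit a $[0,2]$-variable word $v$ using both variables $x_0, x_1$ such that
\[
f(v(00))=f(v(10))=f(v(11))\ne f(v(01)),
\]
since then $f_{\v b}(v(\v b))$ takes the same value for all $\v b\in 2^2$, contradicting the hypothesis that no such $v$ exists.

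The required $v$ will come directly from Lemma \ref{comblem7}. Let $j:=f(0\cdots 0)$. If $j=1$, apply Lemma \ref{comblem7} to $f$; if $j=0$, apply it to $1-f=f_{01}$, which is likewise not \sectionhomo\ for the sequence (sectional-homogeneity is invariant under color flip). In either case one obtains disjoint $\v a_0,\v a_1\in 2^N$ with $\neg f(\v a_0)=f(\v a_1)$ and $f(\v a_0\cup\v a_1)=j$; relabel so that $f(\v a_0)=j$ and $f(\v a_1)=1-j$. Let $v$ be the $[0,2]$-variable word with $P_{x_0}(v)=\v a_0$, $P_{x_1}(v)=\v a_1$, and $0$ at all other positions; then
\[
v(00)=0\cdots 0,\quad v(10)=\v a_0,\quad v(01)=\v a_1,\quad v(11)=\v a_0\cup\v a_1,
\]
and the $f$-values on these four words are $j,j,1-j,j$, exactly the outlier pattern at $(01)$ that was required.

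I do not anticipate a serious obstacle, since the very strong constraints in (\ref{combeq35}) essentially reduce the four-coloring problem to the one-coloring problem already solved by Lemma \ref{comblem7}. One potentially worrisome point is that the labels $\v a_0,\v a_1$ must be assigned to $P_{x_0}(v),P_{x_1}(v)$ matching the correct $f$-values, which might seem to conflict with Definition \ref{combdef1}'s convention $\min P_{x_0}(v)<\min P_{x_1}(v)$; however, the $[0,2]$-variable word notation in this section is deliberately relaxed (as illustrated by the example $v=0x_10$ immediately after Theorem \ref{combth2}, and as already exploited in the proofs of Lemmas \ref{comblem3} and \ref{comblem4}), so the placement of $\v a_0,\v a_1$ is unconstrained by ordering. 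Consequently, ``sufficiently long'' for this lemma reduces straightforwardly to ``sufficiently long'' for a single application of Lemma \ref{comblem7}.
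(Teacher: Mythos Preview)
Your proof is correct and follows essentially the same route as the paper. Both arguments apply Lemma \ref{comblem7} to produce disjoint $\v a_0,\v a_1$ with the right color pattern and then assemble the forbidden $[0,2]$-variable word from $0\cdots 0,\ \v a_0,\ \v a_1,\ \v a_0\cup\v a_1$. Your explicit observation that the constraints force $f_{00}=f_{10}=f_{11}$ and $f_{01}=1-f_{00}$ is exactly what the paper uses implicitly when it reads off $f_{10}(\v a_1)=0$ and $f_{11}(\v a_0\cup\v a_1)=0$ from (\ref{combeq35}); the paper simply never states it in that compact form. The only cosmetic difference is that the paper normalizes to $f_{00}(0\cdots 0)=0$ by a WLOG (costing one block, hence its bound $r-1\ge n_1+1$), whereas you keep both cases and apply Lemma \ref{comblem7} to $f$ or $1-f$ accordingly, so your ``sufficiently long'' is indeed just that of Lemma \ref{comblem7}. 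Your remark about the relaxed ordering convention for $[0,2]$-variable words is on point and matches how the paper uses it elsewhere.
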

\begin{proof}
We use (\ref{combeq35})
to show that a $\02$-variable word (as in the hypothesis of this lemma) exists, thus a contradiction.
Without loss of generality, suppose $f_{00}(0\cdots 0) = 0$.
By Lemma \ref{comblem7},
there exist $\v a_0,\v a_1$ with
$\v a_0\cap \v a_1=\emptyset$
such that $$f_{01}(\v a_0) = 0, f_{01}(\v a_1) = 1
\text{ and }
f_{01}(\v a_0\cup \v a_1) = 1.$$
By (\ref{combeq35}),
$$
f_{10}(\v a_1) = 0, f_{11}(\v a_0\cup \v a_1) = 0.
$$
Thus let $v$ be a $\02$-variable word such that
$v(\v b) = \cup_{i\in \v b} \v a_i$, we have
$f_{\v b}(v(\v b)) = 0$ for all $\v b\in 2^2$,
a contradiction.

\end{proof}

In Lemma \ref{comblem6}, to be sufficiently long, it suffices that
$r-1\geq n_1+1$.

\subsubsection{Case (\ref{combeq32})}
\label{combsubsec2}

This case is similar to (\ref{combeq31}).
\begin{lemma}\label{comblem8}
Let $n_0\cdots n_r$ be sufficiently long and is
not \sectionhomo\ for  colorings
$f_{00},f_{10},f_{01},f_{11}:2^N\rightarrow 2$.
Suppose there does not exist a $\02$-variable word
$v$ such that $f_{\v b}(v(\v b)) = f_{\v{\h b}}(v(\v{\h b}))$
for all $\v b,\v{\h b}\in 2^2$.
Suppose for every $\v a\in 2^N$,
\begin{eqnarray}\label{combeq17}
&\neg f_{00}(\v a) = f_{01}(\v a)=0 , \neg f_{00}(\v a) = f_{10}(\v a) = 1,&
\\ \nonumber
&\neg f_{01}(\v a) = f_{11}(\v a) = 0, \neg f_{10}(\v a) = f_{11}(\v a) = 1.&
\hspace{1cm}
\begin{tikzcd}[column sep = 0.3em, row sep = 0.2em, inner xsep = 0.02em]
 & \text{\tiny$ f_{11} $}\arrow["0" description, dl, dash]\arrow[dr, dash, "1" description] &
\\
 \text{\tiny$ f_{01} $} &                &  \text{\tiny$ f_{10} $}\\
&  \text{\tiny$ f_{00} $}\arrow[ul,dash, "0"  description]\arrow[ur, dash,"1"  description]&
\end{tikzcd}
\end{eqnarray}
Then there exists a sufficiently long sequence
$\h n_0\cdots \h n_{\h r}$ and colorings $\h f_{00},\h f_{10},\h f_{01},\h f_{11}:
2^{\h N}\rightarrow 2$ such that the hypothesis of this lemma is preserved.
\text{ In addition, }
\begin{eqnarray}\label{combeq18}
&\neg \h f_{01}(\v a) = \h f_{10}(\v a)
\text{ for all $\v a\in 2^{\h N}$}.
\hspace{1cm}
\begin{tikzcd}[column sep = 0.3em, row sep = 0.2em, inner xsep = 0.02em]
 & \text{\tiny$ f_{11} $}\arrow["0" description, dl, dash]\arrow[dr, dash, "1" description] &
\\
 \text{\tiny$ f_{01} $} &      \text{\tiny $01$}             &  \text{\tiny$ f_{10} $}\\
&  \text{\tiny$ f_{00} $}\arrow[ul,dash, "0"  description]\arrow[ur, dash,"1"  description]&
\end{tikzcd}
\end{eqnarray}

\end{lemma}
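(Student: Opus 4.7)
The plan is to mirror the proof of Lemma \ref{comblem4} via two successive single-color reductions. The additional conclusion $\neg \h f_{01}(\v a) = \h f_{10}(\v a)$ in (\ref{combeq18}) is the conjunction of ``$\neg \h f_{01}(\v a) = \h f_{10}(\v a) = 0$'' and ``$\neg \h f_{01}(\v a) = \h f_{10}(\v a) = 1$''; proving a reduction that introduces each of these separately and applying the two in succession will suffice. A key consequence of (\ref{combeq17}) is that $f_{01}(\v a) = f_{10}(\v a) = 0$ forces $(f_{00}, f_{01}, f_{10}, f_{11})(\v a) = (1, 0, 0, 1)$ (via $\neg f_{00} = f_{01} = 0$ and $\neg f_{01} = f_{11} = 0$), while $f_{01}(\v a) = f_{10}(\v a) = 1$ forces $(0, 1, 1, 0)$; these dual facts drive the two reductions symmetrically.

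For the first reduction, suppose for contradiction that no reduction introducing ``$\neg \h f_{01} = \h f_{10} = 0$'' exists. As in Lemma \ref{comblem4}, a \search\ argument on $N_0$ lets us assume WLOG $f_{00}(0\cdots 0) = 0$. Then, running the \search\ argument of Claim \ref{combclaim2} against the color-$0$ property $f_{01}(\v a_m) = f_{10}(\v a_m) = 0$, we obtain $\v a_0, \ldots, \v a_{n_0 + n_1 - 1}$ with pairwise disjoint supports, $\min \v a_m^{-1}(1) = m$, and $f_{01}(\v a_m) = f_{10}(\v a_m) = 0$. By induction on $|I|$ we then show $f_{11}(\cup_{m \in I} \v a_m) = 1$ for every nonempty $I \subseteq n_0 + n_1$, contradicting non-\sectionhomo\ of $n_0 \cdots n_r$ for $f_{11}$. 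The base case follows from $f_{01}(\v a_m) = 0 \Rightarrow f_{11}(\v a_m) = 1$. For the inductive step, writing $\v a = \cup_{\h m \in I \setminus \{m\}} \v a_{\h m}$ and noting $f_{11}(\v a) = 1 \Rightarrow f_{10}(\v a) = 0$, if $f_{11}(\v a \cup \v a_m) = 0$ then the $\02$-variable word with $v(00) = 0 \cdots 0$, $v(01) = \v a_m$, $v(10) = \v a$, $v(11) = \v a \cup \v a_m$ (well-defined since $\v a \cap \v a_m = \emptyset$) satisfies $f_{\v b}(v(\v b)) = 0$ for all $\v b \in 2^2$, contradicting the standing hypothesis.

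The second reduction, applied to the data produced by the first, is the $0$-$1$ dual: WLOG $f_{00}(0 \cdots 0) = 1$, the \search\ argument produces $\v a_0, \ldots$ with disjoint supports and $f_{01}(\v a_m) = f_{10}(\v a_m) = 1$, and one shows inductively that $f_{11}(\cup_{m \in I} \v a_m) = 0$ using $f_{10}(\v a_m) = 1 \Rightarrow f_{11}(\v a_m) = 0$ in the base. In the inductive step $f_{11}(\v a) = 0 \Rightarrow f_{01}(\v a) = 1$, and assuming $f_{11}(\v a \cup \v a_m) = 1$, the $\02$-variable word $v(00) = 0 \cdots 0$, $v(01) = \v a$, $v(10) = \v a_m$, $v(11) = \v a \cup \v a_m$ (note the swapped roles of $\v a$ and $\v a_m$ compared to the first reduction, forced by which of $f_{01}, f_{10}$ is pinned on each set) yields all values equal to $1$, the desired contradiction. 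The main delicate point throughout is the bookkeeping for ``sufficiently long'' across the two reductions and the nested \search\ arguments, analogous to the remark after Lemma \ref{comblem4}; the combinatorial step amounts in each case to verifying the validity of the $\02$-variable word and invoking the implications from (\ref{combeq17}).
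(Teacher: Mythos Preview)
Your proposal is correct and follows the paper's approach closely. The paper gives the color-$0$ reduction in full (with the same $[0,2]$-variable word $v(00)=0\cdots 0$, $v(01)=\v a_m$, $v(10)=\v a$, $v(11)=\v a\cup\v a_m$ and the same use of $\neg f_{01}=f_{11}=0$ for the base and $\neg f_{10}=f_{11}=1$ for the inductive implication) and then simply says ``the conclusion follows by applying the proof of (\ref{combeq19}) twice''; your write-up makes the second (color-$1$) pass explicit, correctly swapping the roles of $\v a$ and $\v a_m$ and invoking the complementary pair of constraints from (\ref{combeq17}).
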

\begin{proof}
It suffices to show that
there exists a sufficiently long sequence
$\h n_0\cdots \h n_r$ and colorings $\h f_{00},\h f_{10},\h f_{01},\h f_{11}:
2^{\h N}\rightarrow 2$ preserving the hypothesis
and in addition,
\begin{eqnarray} \label{combeq19}
&\neg \h f_{01}(\v a) = \h f_{10}(\v a)=0
\text{ for all $\v a\in 2^{\h N}$}.
\end{eqnarray}
The conclusion follows by applying the proof of (\ref{combeq19}) twice.
Without loss of generality, assume that
$f_{00}(0\cdots 0) = 0$.
Assume   otherwise that the four colorings
as (\ref{combeq19}) does not exist.
Then, by \search\ argument
(as in Claim \ref{combclaim2}),
there exist $\v a_0,\cdots, \v a_{n_0+n_1-1}\in 2^N$
with $  \v a_m^{-1}(1)$ being mutually disjoint such that
for every $m<n_0+n_1$, $\min \v a_m^{-1}(1)=m$ and
$$
f_{01}(\v a_m) = f_{10}(\v a_m) = 0.
$$
It suffices to show that
\begin{claim}\label{combclaim1}
\text{For every nonempty $I\subseteq n_0+n_1$,
$f_{11}(\cup_{m\in I}\v a_m) = 1$}.
\end{claim}
\noindent Because this implies that $n_0\cdots n_r$ is \sectionhomo\ for $f_{11}$,
a contradiction.
\begin{proof}
We prove Claim \ref{combclaim1} in the same way as Claim  \ref{combclaim0}.
When $|I| = 1$, the conclusion clearly follows since
$f_{11}(\v a_m)\ne f_{01}(\v a_m) = 0$ (by (\ref{combeq17})).
The point is,
\begin{align}\label{combeq26}
&\text{for every $m<n_0+n_1$, every $\v a\in 2^N$ with
$\v a\cap \v a_m=\emptyset$, }\\ \nonumber
&\text{and with $f_{11}(\v a) = 1$, }
\text{we have $f_{11}(\v a\cup \v a_m) = 1$}.
\end{align}
To see this, note that $f_{11}(\v a) = 1$
implies $f_{10}(\v a) = 0$.
If on the other hand, $f_{11}(\v a\cup \v a_m) = 0$,
then let $v$ be the $\02$-variable word such that
$$v(00) = 0\cdots 0, v(10) = \v a, v(01) = \v a_m, v(11) = \v a\cup \v a_m,$$
we have $f_{\v b}(v(\v b)) = 0$ for all $\v b\in 2^2$, a contradiction
to the hypothesis of this lemma.

Suppose the claim holds when $|I|\leq n$ and now $|I| = n+1$
with $m\in I$.
Consider $\v a = \cup_{\h m\in I\setminus \{m\}}\v a_{\h m}$
and note that $\v a \cap \v a_m =\emptyset$.
By induction, $f_{11}(\v a) = 1$.
Therefore, by (\ref{combeq26}),
$f_{11}(\v a\cup \v a_m) = 1$. Thus  we are done.
\end{proof}

\end{proof}

In Lemma \ref{comblem8}, the definition of sufficiently long
is similar as that of Lemma \ref{comblem4}.

\begin{lemma}\label{comblem10}
Let $n_0\cdots n_r$ be sufficiently long and is
not \sectionhomo\ for  colorings
$f_{00},f_{10},f_{01},f_{11}:2^N\rightarrow 2$.
Suppose for every $\v a\in 2^N$,
\begin{eqnarray}\label{combeq20}
&\neg   f_{00}(\v a) =  f_{01}(\v a)=0 ,& \neg   f_{00}(\v a) =   f_{10}(\v a) = 1,\\ \nonumber
&\neg   f_{01}(\v a) =   f_{11}(\v a) = 0,& \neg   f_{10}(\v a) =   f_{11}(\v a) = 1,
\\ \nonumber
&\neg   f_{01}(\v a) =   f_{10}(\v a).\ \ \ &
\hspace{1cm}
\begin{tikzcd}[column sep = 0.3em, row sep = 0.2em, inner xsep = 0.02em]
 & \text{\tiny$ f_{11} $}\arrow["0" description, dl, dash]\arrow[dr, dash, "1" description] &
\\
 \text{\tiny$ f_{01} $} &      \text{\tiny $01$}             &  \text{\tiny$ f_{10} $}\\
&  \text{\tiny$ f_{00} $}\arrow[ul,dash, "0"  description]\arrow[ur, dash,"1"  description]&
\end{tikzcd}
\end{eqnarray}
Then there is a contradiction.

\end{lemma}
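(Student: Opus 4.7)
The plan is to observe that the five pointwise constraints already force $f_{01}$ to be a constant coloring, which will directly contradict the hypothesis that $n_0 \cdots n_r$ is not \sectionhomo\ for $f_{01}$. No Ramsey-style combinatorial construction, and in particular no use of the sufficiently long hypothesis or of any preceding lemma in this subsection, should be required.

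First I would fix an arbitrary $\v a \in 2^N$ and use the fifth constraint $f_{01}(\v a) \ne f_{10}(\v a)$ to split into two cases. In the case $f_{01}(\v a) = 0$ (so $f_{10}(\v a) = 1$), the first constraint $\neg(f_{00}(\v a) = f_{01}(\v a) = 0)$ forces $f_{00}(\v a) = 1$, while the second constraint $\neg(f_{00}(\v a) = f_{10}(\v a) = 1)$ forces $f_{00}(\v a) = 0$. These are contradictory, so this case cannot occur for any $\v a$.

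Consequently $f_{01}(\v a) = 1$ for every $\v a \in 2^N$, and symmetrically $f_{10} \equiv 0$. In particular $f_{01}$ is identically $1$. But a constant coloring is trivially \sectionhomo\ for any sequence $n_0 \cdots n_r$: for any $s \leq r-1$ one can take the $n_s$-variable word $v$ of length $N$ that places $x_0, \ldots, x_{n_s-1}$ at the positions of $N_s$ and fills the remaining coordinates with $0$'s; then $\{\min P_{x_m}(v) : m \in \omega\} = N_s$ and every substitution produces the same $f_{01}$-value, so $v$ is monochromatic. This contradicts the hypothesis that $n_0\cdots n_r$ is not \sectionhomo\ for $f_{01}$.

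Since the argument is a finite case analysis on the pointwise constraints, there is no substantive obstacle. The only thing to get right is parsing the fifth constraint as the pointwise inequality $f_{01}(\v a) \ne f_{10}(\v a)$ and noticing that constraints (3) and (4) become automatic once we are in the regime $f_{01} \equiv 1,\, f_{10} \equiv 0$, so the active constraints in the contradiction are just (1), (2), and (5). This explains why, in contrast to the other two terminal cases treated in Lemmas \ref{comblem3} and \ref{comblem6}, no auxiliary Ramsey-type lemma (analogous to Lemma \ref{comblem2} or Lemma \ref{comblem7}) is needed.
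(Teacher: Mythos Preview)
Your proposal is correct and matches the paper's proof essentially verbatim: the paper likewise observes that the constraints force $f_{01}(\v a)=1,\ f_{10}(\v a)=0$ for every $\v a$, whence $f_{01}$ (and $f_{10}$) is constant and $n_0\cdots n_r$ is \sectionhomo\ for it. Your explicit case analysis via constraints (1), (2), (5) just spells out what the paper states in one line, and your remark that no Ramsey-type input is needed agrees with the paper's note that here ``sufficiently long'' merely means $r\geq 0$.
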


\begin{proof}
This is simply because
given $\v a\in 2^N$, the only  value
of $f_{01}(\v a), f_{10}(\v a)$ consistent with (\ref{combeq20})
is
$$
f_{01}(\v a) = 1, f_{10}(\v a) = 0.
$$
Thus $n_0\cdots n_r$ is \sectionhomo\ for $f_{01},f_{10}$.

\end{proof}
In Lemma \ref{comblem10}, to be sufficiently long,
it suffices
that $r\geq0$.

\subsection{Questions and further discussion}

There are mainly two difficulties  
to generalize  Theorem \ref{combth2} (say, to $n_0=3$).
(1) The restriction given by an analog of Lemma \ref{comblemred}
is not strong enough;
(2) Even if $(f_{\v b}: \v b\in 2^{n_0})$
 satisfies a very strong restriction, say for some coloring
 $f:2^N\rightarrow 2$, $f_{\v b}(\v a) = f(\v a)+ |\v b^{-1}(1)|(\mod 2)$,
 the corresponding Lemma \ref{comblem2} is unknown:
 \begin{question}
Let $n>0$; let $n_0\cdots n_r$ be sufficiently long and is
not \sectionhomo\ for  coloring $f:2^N\rightarrow 2$.
Does  there exist a $[0,n]$-variable word $v$ and an $i\in 2$
such that $f(v(\v b)) = |\v b^{-1}(1)|+i (\mod 2)$ for all $\v b\in 2^n$.
 \end{question}

\noindent Actually we do not even known the following:
\begin{question}
Let $n_0\cdots n_r$ be sufficiently long and is
not \sectionhomo\ for  coloring $f:2^N\rightarrow 2$.
Does  there exist a $[0,3]$-variable word $v$ and an $i\in 2$
such that $f(v(\v b)) = |\v b^{-1}(1)|+i (\mod 2)$
for all $\v b\in 2^3$ with $|\v b^{-1}(1)|\in \{1,2\}$.
\end{question}
\noindent On the other hand, we don't know if Theorem \ref{combth3} follows 
with more than two colors. Actually we don't even know whether 
Lemma \ref{comblem2} follows with three colors:
\begin{question}
Let $n_0\cdots n_r$ be sufficiently long and is
not \sectionhomo\ for  coloring $f:2^N\rightarrow 3$.
Does there exist  $\v a_0,\v a_1\in 2^N$ with $\v a_0 \cap \v a_1 =\emptyset$
such that $  f(\v a_0)=f(\v a_1)=0, f(\v a_0\cup \v a_1) = 1$.
\end{question}

In Theorem \ref{combth3}, to be sufficiently long means containing 
a subsequence that is sufficiently long in a weaker sense; and to be
sufficiently long in that weaker sense means containing
a subsequence that is sufficiently long in a yet weaker sense and so forth.
The iteration goes for   four times and in the last iteration, to be sufficiently long 
means something like $r>n_0$. Roughly speaking, this can be seen as 
a sufficiently long notion of cardinal $4$.
On the other hand, we don't know if a sufficiently long notion of cardinal $1$
suffices. i.e.,

\begin{question}

 Is it true that for some $r\in\omega$, for every $n>0$,
$\opp_2^2(2 \nr)$ does not hold;
actually we wonder whether $\opp_2^2(2nnn)$ is not true
for all $n>0$.
More generally, is it true that for every $\h n>0$, there exists an
$r\in\omega$ such that
for every $n>0$,
$\opp_2^2(\h n \nr)$ does not hold.

\end{question}

For $n=2$, although we don't know how to prove,
 Adam P. Goucher, using SAT solver,
showed that $\opp_2^2(2222)$ is not true

(
\href{https://mathoverflow.net/questions/293112/ramsey-type-theorem}
{https://mathoverflow.net/questions/293112/ramsey-type-theorem}
).
Note that $\opp_2^2(222)$ is true (see Proposition \ref{combprop1}).

\bibliographystyle{amsplain}
\bibliography{F:/6+1/Draft/bibliographylogic}

\end{document}